\newtheorem{theorem}{Theorem}[section]
\numberwithin{equation}{section}
\newtheorem{definition}[theorem]{Definition}
\newtheorem{example}[theorem]{Example}
\newtheorem{lemma}[theorem]{Lemma}
\newtheorem{remark}[theorem]{Remark}
\newenvironment{proof}[1][Proof]{\textbf{#1.}}{\ \rule{0.5em}{0.5em}}%
\begin{document}
\parindent 9mm
\title{Perturbations of Admissibility, Exact Controllability,
Exact Observability and Regularity
\thanks{This work was supported by the National Natural Science Foundation of China (grant nos. 11301412 and 11131006), Research Fund for the Doctoral Program of Higher Education of China (grant no. 20130201120053), Natural Science Foundation of Shaanxi Province (grant no. 2014JQ1017), Project funded by China Postdoctoral Science Foundation (grant nos. 2014M550482 and 2015T81011). Part of this work was done during the first author was visiting Prof. Bao-Zhu Guo at
Academy of Mathematics and Systems Science, The Chinese Academy of Sciences.}
\thanks{2010 Mathematics Subject Classification. 34K30; 35F15; 47D06; 92D25.}}
\author{ Zhan-Dong Mei
\thanks{Corresponding
author, School of Mathematics and Statistics, Xi'an Jiaotong
University,
 Xi'an
710049, China; Email: zhdmei@mail.xjtu.edu.cn } \ \ \
 Ji-Gen Peng \thanks{School of Mathematics and Statistics, Xi'an Jiaotong
University,
 Xi'an
710049, China; Email: jgpeng@mail.xjtu.edu.cn}}


\date{}
\maketitle \thispagestyle{empty}
\begin{abstract}
%
%
This paper is concerned with the notions of admissibility, exact controllability, exact observability and regularity of linear systems in the Banach space setting. It is proved that admissible controllability, exact controllability, admissible observation, exact observability and regularity are invariant under some regular perturbations of the generators, such results are generalizations of some previous references. Moreover, the related boundary linear systems and some illustrative examples are presented.

\vspace{0.5cm} 

%
%
\noindent {\bf Key words:} Admissibility; Exact controllability; Exactly observability;
Regular linear systems; Boundary linear systems.

\end{abstract}


\section{Introduction}
In the theory of finite dimensional linear control system,
the final state and output are continuously depended on the initial state and input. Observe that such
continuous dependence is the essential property in system theory. Motivated by this, Salamon \cite{Salamon1987} introduce the class of well-posed liner systems by continuous dependence in Hilbert space setting. Later, Weiss \cite{Weiss1989a,Weiss1989b,Weiss1989c} simplified Salamon's
theory; he described well-posed linear system equivalently by using four algebraic equations (see the description in Section 2). In the functional analysis frame, the control operators and observation operators of well-posed linear system may be unbounded, which allow ones to study partial differential equations with boundary control and boundary observation. Over the last decades there has been a growing interest in well-posedness of partial differential equations with control and observation on the boundary, and it has been proved that
many partial differential equations can be formulated as well-posed linear systems
\cite{Ammari2002,Guo2009Z,Lasiecka1992,Lasiecka2003,Malinen2006,Salamon1987,Salamon1989}.
Regular linear systems, introduced by Weiss \cite{Weiss1989c}, are among the well-posed systems whose output function corresponding to a step input function and zero initial state is not very discontinuous at zero (see the definition in Section
2). Many well-posed physical systems are also regular, see \cite{Chai2010a,Guo2005S,Guo2006S,Guo2012S,Guo2005Z,Guo2007Z,
Hadd2005b,Hadd2006b,Weiss2002}. Regular linear systems have a convenient representation, similar to that of finite dimensional systems. Concretely, Weiss showed in \cite{Weiss1989c} that regular
linear systems with unbounded control and observation operators
can be simply represented by
\begin{eqnarray*}
  \dot{x}(t)=Ax(t)+Bu(t), y(t)=C_\Lambda x(t)+Du(t),
\end{eqnarray*}
where $C_\Lambda$ is the $\Lambda$-extension of the observation
operator $C$ with respect to system operator $A$ (see Section 2). In this sense,  an infinite-dimensional regular linear systems have the characteristic of ``finite-dimensional systems". Many references were concerned with abstract control theory under the frame of regular linear systems.

In order to obtain a well-posed and regular linear system, the control and observation operators should be admissible
for the system operator (see, e.g.,
\cite{Salamon1987,Salamon1989,Weiss1989c,Weiss1994a,Weiss1994b}).
Hence the the concepts of admissibility of control and
observation operators have been discussed by many references,
most of which are interested in proving or
disproving Weiss' conjecture (see, e.g.,
\cite{Haak2006,Jacob2001a,Zwart2003,Zwart2005}).
Here, we mention an important work due to Zwart \cite{Zwart2005}; he
proved that the Weiss conjecture almost holds in Hilbert spaces.

For admissible control and admissible observation system, one can consider the notion of exact controllability and exact observability, because the enters into the study of many other important concepts. For instance, exact controllability is closely related to stabilizability and optimizability, while exact controllability is closely related to detectability and estimatability \cite{Guo2002L,Lasiecka2003,Weiss2000}. Exact controllability and exact observability have received considerable attention in the functional analysis frame (see e.g. \cite{Haak2012,Jacob2001,Jacob2009,Lasiecka1992a,Partington2005,Rebarber2000,
Russell1994,Xu2008}), where some necessary and/or sufficient conditions have been given.

Generally, it is not an easy task to verify the admissibility, exact controllability, exact observability and regularity for a specific linear system with boundary control and /or boundary observation. Due to the difficulties of direct proving the admissibility and regularity, perturbation method has been successfully used to study the the admissibility and regularity. Weiss \cite{Weiss1989b} discussed the admissibility
of observation system under bounded perturbation of the system operator, namely, $C$ being admissible observable operator for $A$ implies that $C$ is admissible for $A+P$, provided $P$ is an bounded linear operator on the state space.
In \cite{Weiss1994b}, Weiss showed that the closed-loop system of well-posed linear system preserves the admissibility, exact controllability and exact observability. Moreover, the closed-loop system of regular linear system preserve the regularity.
Hadd \cite{Hadd2005a} proved that both $B$ and $\Delta A$ are $p$-admissible controllable operators for $A$ imply that $B$ is $p$-admissible for $(A+\Delta A)|_X$; $((A+\Delta A)|_X,B)$ is exactly controllable provided $(A,B)$ is exactly $p$-controllable and $\Delta A$ is ``small" enough. In their paper \cite{Hadd2006}, Hadd showed that $C$ and $\Delta A$ being $p$-admissible observable operators for $A$ implies $C$ is $p$-admissible for $ A+\Delta A$. Moreover, Tucsnak and Weiss \cite{Tucsnak2009} proved that if $(A,C)$ is exactly observable and $\Delta A$ is ``small" enough, then $(A+\Delta A,C)$ is exactly observable. Later, Mei and Peng \cite{Mei2010b,Mei2014} weakened the condition of \cite{Hadd2005a,Hadd2006,Tucsnak2009} that $\Delta A$ is $p$-admissible controllable (observable) operator to $\Delta A$ being $q$-admissible controllable (observable) operator. Mei and Peng \cite{Mei2010} proved that the admissibility, exact controllability and exact observation are preserved under cross perturbations, that is, $(A,B,\Delta A)$ is a regular linear system, then $B$ is admissible for $A+\Delta A$ and $(A+\Delta A,B)$ is exactly controllable provided $(A,B)$ is exactly controllable and $\Delta A$ is ``small" enough; $(A,\Delta A,C)$ is a regular linear system, then $C_\Lambda^A$ is admissible for $(A_{-1}+\Delta A)|_X$ and $((A_{-1}+\Delta A)|_X,C_\Lambda^A)$ is exactly observable provided $(A,C)$ is exactly observable and $\Delta A$ is ``small" enough. In their paper \cite{Mei2010a}, Mei and Peng proved that $(A,B,\Delta A)$ and $(A,B,C)$ generating regular linear systems imply that $(A+\Delta A,B,C_\Lambda^A)$ generates a regular linear system; $(A,\Delta A,C)$ and $(A,B,C)$ generating regular linear systems imply that $((A_{-1}+\Delta A)|_X,B,C)$ generates a regular linear system.

The aim of this paper is to study some general perturbation theorems of admissibilities, exact controllabilities, exact observations and regularities. Apart from the introduction, our arrangement is as follows.  In Section 2, we introduce  some basic notions and properties related to regular linear systems and boundary systems; the notions of exact controllability and exact observation are also be introduced. Section 3 is to
give our main results. Concretely, we obtain admissible controllability, admissible observation, exact controllability, exact observation and regularity under some regular perturbations. Moreover, all the perturbation results are used to solve the corresponding boundary systems. The systems governed by  specific partial differential equations are presented to illustrate our results.

\section{Preliminaries}

In this section, we recall some definitions related to
regular linear systems and boundary linear systems.
As stated in the introduction, Weiss has showed that the continuous dependence of state and output on the initial state and input can be simplified to four algebraic equations. We adopted Weiss' definition for well-posed linear system \cite{Weiss1989c}.

\begin{definition}\label{wellposed}
A quadruple $\Sigma=(T,\Phi,\Psi,F)$ is said to be a well-posed linear system on $(X,U,Y)$, if the following four
conditions are satisfied:

(i) $T=\{T(t)\}_{t\geq 0}$ is a
$C_0$-semigroup generated by $A$ on $X$;

(ii) $\Phi=\{\Phi(t)\}_{t\geq
0}$ is a family of bounded linear operators, called {\it input maps}, from $L^p(R^+,U)$ to $X$ such that
$$\Phi(t+\tau)u=T(t)\Phi(\tau)u+\Phi(t)u(\cdot+\tau), \ \forall u\in
L^p(R^+,U),\tau\geq 0, t\geq 0,$$
we call $(T,\Phi)$ an {\it
    abstract linear control system;}

(iii) $\Psi=\{\Psi(t)\}_{t\geq
0}$ is a family of bounded linear operators, called {\it output maps}, from
$X$ to $L^p(R^+,Y)$ such that
$$(\Psi(t+\tau) x)(s)=(\Psi(t)
T(\tau) x)(s-\tau),\ \forall x\in X, t+\tau\geq s\geq\tau\geq 0,t\geq 0,$$
we call $(T,\Psi)$ an {\it
    abstract linear observation system;}

(iv) $F=\{F(t)\}_{t\geq
0}$ is a family of bounded linear operators, called {\it
input-output map}, from $L^p(R^+,U)$ to $L^p(R^+,Y)$ such that
$$(F(t+\tau) u)(s)=(\Psi(t) \Phi(\tau)u+F(t)
u(\cdot+\tau))(s-\tau),\ \forall u\in L^p(R^+,U),t+\tau\geq s\geq\tau\geq 0,t\geq 0.$$
\end{definition}

By a representation theorem due to Salamon \cite{Salamon1989} (see also Weiss \cite{Weiss1989a}), corresponding to abstract linear control system $(T,\Phi)$, there is a unique control operator $B\in L(U,X_{-1})$, called {\it admissible control operator} (also $p$-admissible control operator), satisfying
$$\Phi(t)u=\int_0^tT_{-1}(t-s)Bu(s)ds\in X, \forall u\in
L^p(R^+,U),t\geq 0.$$
Here $T_{-1}$ is the extrapolation
semigroup, which is the continuous extension of $T$ to the
extrapolation space $X^A_{-1}$ defined by the completion
of $X$ under the norm $\|R(\lambda_0,A)\cdot\|$ with
$R(\lambda_0,A)$ being the resolvent of $A$ and $\lambda$ belonging the resolvent set of $A$. The generator of $T_{-1}$ is the continuous extension of $A$ to $X$ and is denoted by $A_{-1}$.
In this case, we also say $(A,B)$ generates an abstract
linear control system and denote $\Phi_{A,B}$.
Moreover, if $\Phi_{A,B}(\tau)$ is surjective, we call $(A,B)$ to be exactly controllable (also exactly $p$-controllable) at $\tau.$

It follows from Salamon \cite{Salamon1989} or Weiss \cite{Weiss1989b} that an abstract linear observation system $(T,\Psi)$ corresponds a unique operator, called {\it admissible observation operator} (also $p$-admissible observation operator) $C\in L(D(A),Y)$ satisfying
$$\int_0^{t_0}\|CT(t)x\|^pdt\leq c(t_0)\|x\|^p, \forall x\in
D(A)$$ such that $(\Psi(t) x)(\tau)=CT(\tau)x, \forall x\in
D(A),\tau\leq t$.
In this case, we also say $(A,C)$ generates an abstract
linear control system and denote $\Psi_{A,C}$. Moreover, $(A,C)$ is called to be exactly observable (also exactly $p$-observable) at $\tau$, provided there exists a constant $k>0$ such that $\|\Psi_{A,C}(\tau)x\|\geq k\|x\|,\ x\in X$.

Let $\Sigma=(T,\Phi,\Psi,F)$ be well-posed linear system.  For any $x(0)\in X, u\in L^p_{loc}(R^+,U)$, $x(t)=T(t)x(0)+\Phi(t)u$ is the solution of equation $\dot{x}(t)=A_{-1}x(t)+Bu(t).$ Define output $y=\Psi(\infty) x_0+F(\infty) u,$
 where $\Psi(\infty):\hbox{ }X\rightarrow L^2_{loc}(R^+,Y)$ and $F(\infty):\hbox{ }L^p_{loc}(R^+,U)\rightarrow L^p_{loc}(R^+,Y)$ are the extended output map defined by the
strong limit of $\Phi(\tau)$ and $F(\tau)$ as $\tau\rightarrow +\infty$, respectively (see \cite{Weiss1989b,Weiss1989c}).
In the special case $u=0$, it follows from \cite[Theorem 4.5 and Proposition 4.7]{Weiss1989b} that for any $x(0)\in X$, $y(t)=C_\Lambda^A T(t)x(0)$ a.e. $t\geq 0$, where $C_\Lambda^A$ defined by
\begin{equation}
C_\Lambda^A x=\lim_{\lambda\rightarrow \infty}C\lambda R(\lambda,A)x,\ x\in D(C_\Lambda^A)=\{x\in X:\mbox{this above limit exists in}\, Y\}
\end{equation}
 is called {\it $\Lambda$-extension} of $C$ with respect to $A$. By \cite{Staffans2002}, it follows that the output $y(t)$ can be expressed by
$$y(t)=C_\Lambda^A[x(t)-(\lambda-A_{-1})^{-1}B]u(t)+G(\lambda)u(t),$$
$a.e.\ t\geq 0,$ where $G(\lambda)$ is the transform function.
It is not hard to see that Definition \ref{wellposed} implies continuous dependence, that is, there exist positive function $m$ and $n$ on $R^+$ such that
$$\|x(t)\|+\|y\|_{L^{p}([0,t],Y)}\leq m(t)\|x(0)\|+n(t)\|u\|_{L([0,t],U)},\ t\geq 0.$$

The well-posed linear system $\Sigma$ is called a
{\it regular linear system} if, there exists a bounded operator $D$, called {\it feedthrough operator}, such that the limit $$\lim\limits_{s\rightarrow
0}\frac{1}{s}\int_0^s(F(t) u_0)(\sigma)d\sigma=Dz$$
exists in $Y$
for the constant input $u_0(t)=z$, $z\in U$, $t\geq 0$.
Weiss showed that well-posed linear system $\Sigma$ is regular if and only if $G(\lambda)$ strongly converges to $D$ as
$\lambda\rightarrow +\infty$, that is,
$$\lim_{\lambda\rightarrow +\infty}G(\lambda)u=Du,\ u\in U.$$
The regular linear system is described by
\begin{eqnarray*}
  \dot{x}(t)=Ax(t)+Bu(t), y(t)=C_\Lambda^A x(t)+Du(t).
\end{eqnarray*}

\begin{definition}\cite{Staffans2005,Weiss1994b}
An operator $\Gamma\in L(Y,U)$ is called an admissible feedback operator
for $\Sigma=(T,\Phi,\Psi,F)$ if $I-F(t)\Gamma$ is invertible for some $t\geq 0$ (hence any $t\geq 0$).
\end{definition}

\begin{theorem}\label{relation}\cite{Weiss1994b}
Let $(A,B,C,D)$ be the generator of regular linear system
$\Sigma=(T,\Phi,\Psi,F)$ on $(X,U,Y)$ with admissible feedback
operator $\Gamma\in L(Y,U)$. Suppose that $I-D\Gamma$ is invertible. Then the feedback system $\Sigma^\Gamma$ is
a well-posed linear system generated by
$(A^\Gamma,B^\Gamma,C^\Gamma)$:
\begin{eqnarray*}
  &A^\Gamma =(A_{-1}+B\Gamma(I-D\Gamma)_{left}^{-1} C^A_\Lambda)|_X,\;\\
 &D(A^\Gamma): = \{z\in D(C^A_\Lambda):(A_{-1}+B\Gamma (I-D\Gamma)^{-1}C^A_\Lambda)z\in
 X\},
\end{eqnarray*}
$C^\Gamma=(I-D\Gamma)^{-1}C^A_\Lambda$ restricted to
$D(A^\Gamma)$ and $B^\Gamma=J^{A,A^\Gamma}(I-D\Gamma)^{-1}B$, where $J^{A,A^\Gamma}$ is defined by
$J^{A,A^\Gamma}x=\lim_{\lambda\rightarrow
\infty}(\lambda-A_{-1})^{-1}x$ (in $X_{-1}^{A^\Gamma}$) with
$D(J^{A,A^\Gamma})=\{x\in X^A_{-1}:$ the limit
$\lim_{\lambda\rightarrow \infty}(\lambda-A_{-1})^{-1}x$ exists
$\}$.
\end{theorem}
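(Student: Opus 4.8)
The plan is to construct the closed-loop quadruple $\Sigma^\Gamma=(T^\Gamma,\Phi^\Gamma,\Psi^\Gamma,F^\Gamma)$ explicitly by substituting the feedback law $u=v+\Gamma y$ into the input--output relation of $\Sigma$, then to check that the resulting maps satisfy the four algebraic conditions of Definition \ref{wellposed}, and finally to read off the generating triple $(A^\Gamma,B^\Gamma,C^\Gamma)$ using the regularity of $\Sigma$. With $v$ the new external input and $u=v+\Gamma y$ the actual input, the relation $y=\Psi(t)x_0+F(t)u$ becomes $(I-F(t)\Gamma)y=\Psi(t)x_0+F(t)v$; since $\Gamma$ is an admissible feedback operator, $I-F(t)\Gamma$ is boundedly invertible on $L^p([0,t],Y)$ for every $t\geq0$. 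Solving for $y$ and resubstituting, with the help of the elementary identity $I+\Gamma(I-F\Gamma)^{-1}F=(I-\Gamma F)^{-1}$, gives
\[
\Psi^\Gamma(t)=(I-F(t)\Gamma)^{-1}\Psi(t),\qquad F^\Gamma(t)=(I-F(t)\Gamma)^{-1}F(t),
\]
\[
\Phi^\Gamma(t)=\Phi(t)(I-\Gamma F(t))^{-1},\qquad T^\Gamma(t)=T(t)+\Phi(t)\Gamma(I-F(t)\Gamma)^{-1}\Psi(t),
\]
all bounded operators between the appropriate spaces.

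The next step is to verify the axioms. Strong continuity of $T^\Gamma$ at $0$ follows from that of $T,\Phi,\Psi$ together with the uniform boundedness of $(I-F(t)\Gamma)^{-1}$ on compact time intervals, a consequence of the causality of $F$. The semigroup property of $T^\Gamma$ and the composition identities (ii)--(iv) for $\Phi^\Gamma,\Psi^\Gamma,F^\Gamma$ are then obtained by expanding the definitions and repeatedly invoking the corresponding identities for $\Sigma$, together with the shift structure of $F$ which lets one relate $(I-F(t+\tau)\Gamma)^{-1}$ to $(I-F(t)\Gamma)^{-1}$ and $(I-F(\tau)\Gamma)^{-1}$. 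This already shows that $\Sigma^\Gamma$ is a well-posed linear system; write $A^\Gamma$ for the generator of $T^\Gamma$, $B^\Gamma\in L(U,X_{-1}^{A^\Gamma})$ for its control operator, and $C^\Gamma\in L(D(A^\Gamma),Y)$ for its observation operator.

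It remains to identify these three operators, and this is where the regularity of $\Sigma$ is used. For $v=0$ the closed-loop state $x(\cdot)=T^\Gamma(\cdot)x_0$ is the mild solution of $\dot x=A_{-1}x+Bu$ with $u=\Gamma y$ and $y$ the closed-loop output; since $\Sigma$ is regular, $x(t)\in D(C_\Lambda^A)$ for a.e.\ $t$ and $y(t)=C_\Lambda^A x(t)+Du(t)$ a.e., so $u=\Gamma y$ yields $(I-D\Gamma)y=C_\Lambda^A x$, hence $y=(I-D\Gamma)^{-1}C_\Lambda^A x$ and
\[
\dot x=\big(A_{-1}+B\Gamma(I-D\Gamma)^{-1}C_\Lambda^A\big)x .
\]
To upgrade this formal computation to the stated description of $A^\Gamma$, I would either show directly that $t\mapsto T^\Gamma(t)x_0$ is continuously differentiable with derivative $(A_{-1}+B\Gamma(I-D\Gamma)^{-1}C_\Lambda^A)T^\Gamma(t)x_0$ exactly when $x_0\in\{z\in D(C_\Lambda^A):(A_{-1}+B\Gamma(I-D\Gamma)^{-1}C_\Lambda^A)z\in X\}$, or, equivalently, compute $\int_0^\infty e^{-\lambda t}T^\Gamma(t)\,dt$ for large real $\lambda$ in terms of $(\lambda-A_{-1})^{-1}$, $(\lambda-A_{-1})^{-1}B$ and the transfer function $G(\lambda)$, and pass to the limit $G(\lambda)\to D$ to recognise it as the resolvent of $(A_{-1}+B\Gamma(I-D\Gamma)^{-1}C_\Lambda^A)|_X$ with the stated domain. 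Once $A^\Gamma$ is identified, $C^\Gamma$ is read off from $(\Psi^\Gamma(t)x_0)(\tau)=C^\Gamma T^\Gamma(\tau)x_0$ for $x_0\in D(A^\Gamma)$, giving $C^\Gamma=(I-D\Gamma)^{-1}C_\Lambda^A$ restricted to $D(A^\Gamma)$, and $B^\Gamma$ from $\Phi^\Gamma(t)v=\int_0^tT^\Gamma_{-1}(t-s)B^\Gamma v(s)\,ds$, where the operator $J^{A,A^\Gamma}$ enters because $(I-D\Gamma)^{-1}B$ is a priori an element of $X_{-1}^A$ that must be transported into the extrapolation space $X_{-1}^{A^\Gamma}$.

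The main obstacle is this last identification step. The algebra of the first two paragraphs is routine once the bounded invertibility of $I-F(t)\Gamma$ is in hand, but making the passage from the formal equation $\dot x=(A_{-1}+B\Gamma(I-D\Gamma)^{-1}C_\Lambda^A)x$ to a precise statement about the generator and its domain requires the full strength of the regularity theory underlying Section 2 --- the a.e.\ validity of $y(t)=C_\Lambda^A x(t)+Du(t)$, the a.e.\ invariance of $D(C_\Lambda^A)$ along state trajectories, and the behaviour of $G(\lambda)$ as $\lambda\to\infty$ --- together with careful bookkeeping of the two distinct extrapolation spaces $X_{-1}^A$ and $X_{-1}^{A^\Gamma}$, which is precisely what $J^{A,A^\Gamma}$ is built to handle.
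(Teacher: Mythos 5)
The paper does not prove this theorem: it is quoted, with attribution, from Weiss \cite{Weiss1994b}, so there is no internal argument to compare yours against. That said, your outline is a faithful reproduction of the strategy of Weiss's original proof — the closed-loop formulas $\Psi^\Gamma=(I-F\Gamma)^{-1}\Psi$, $F^\Gamma=(I-F\Gamma)^{-1}F$, $\Phi^\Gamma=\Phi(I-\Gamma F)^{-1}$, $T^\Gamma=T+\Phi\Gamma(I-F\Gamma)^{-1}\Psi$ are the correct ones, and the verification of the four axioms plus the resolvent/Laplace-transform identification of $A^\Gamma$ is exactly how \cite{Weiss1994b} proceeds. The one caveat is that the step you flag as the main obstacle — upgrading the formal equation $\dot x=(A_{-1}+B\Gamma(I-D\Gamma)^{-1}C^A_\Lambda)x$ to the precise description of $A^\Gamma$ and its domain, and the bookkeeping between $X_{-1}^A$ and $X_{-1}^{A^\Gamma}$ via $J^{A,A^\Gamma}$ — is where essentially all of the technical content of Weiss's proof lives, and in your proposal it remains a plan rather than an executed argument.
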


In the rest of this section, we introduce some notions related to linear boundary system described in the abstract frame as follows
\cite{Malinen2006,Salamon1987}.
 \begin{eqnarray}\label{boundctr}
 \left\{
   \begin{array}{ll}
     \dot{z}(t)&=Lz(t), \\
     Gz(t)&=u(t), \\
     y(t)&=Kz(t),
   \end{array}
 \right.
 \end{eqnarray}
 where $\bigg[\begin{array}{c}
            L \\
            G \\
            K
          \end{array}\bigg]
   $ is closed linear operators from $D(L)$ to space $X \times U\times Y$; $D(L)$ is continuously embedded in $X$;
  $G$ is surjection and $Ker\{G\}:=\{z\in Z: Gz=0\}$ is dense in $X$;
  $L|_{Ker\{G\}}$ generates a $C_0$-semigroup on $X$.
We denote system (\ref{boundctr}) by $(L,G,K)$ for brief.

Denote $A=L|_G$, $C=K|_{D(A)}$. By \cite{Greiner1987}, $D(L)$ can be
decomposed to direct sum $D(L)=D(A)\bigoplus Ker\{\lambda-L\}$ and
the operator $G$ is bijective from $Ker\{\lambda-L\}$ onto $U$,
where $\lambda$ is any component of resolvent set $\rho(A)$ of $A$. Hence we
can denote $D_{\lambda,L,G}$ by the solution operator from $z$ to
$u$ of the following function
\begin{align*}
    \left\{
      \begin{array}{ll}
        (\lambda-L)z=0, & \hbox{} \\
        Gz=u, & \hbox{}
      \end{array}
    \right.
\end{align*}
that is $z=D_{\lambda,L,G}u.$ By \cite{Malinen2006,Salamon1987}, it follows that boundary control system
\begin{align*}
    \left\{
      \begin{array}{ll}
        \dot{z}(t)=Lz(t), & \hbox{} \\
        Gz(t)=u(t), & \hbox{}
      \end{array}
    \right.
\end{align*}
is equivalent to system
$$\dot{z}(t)=A_{-1}z(t)+Bu(t)$$
in the sense of classical solution, where $B$ is given by $   B=(\lambda-A_{-1})D_{\lambda,L,G}\in L(U,X_{-1})$.
Concretely, $z(t)$ and $u(t)$ satisfying $Gz(t)=u(t)$ is equivalent to $A_{-1}z(t)+Bu(t)\in X$; if $z(0)\in X$, $u\in W^{2,p}(R^+,U)$ satisfying $A_{-1}z(0)+Bu(0)\in X$, we have
$\dot{z}(t)=Lz(t)=A_{-1}x(t)+Bu(t)$. Moreover the initial condition implies that $y(t)=
Kz(t)=C(x(t)-(\lambda-A_{-1})^{-1}Bu(t))+K(\lambda-A_{-1})^{-1}Bu(t).$
Hence $C$ is the observation operator of the boundary system $(L,G,K)$ and the corresponding transform function is $K(\lambda-A_{-1})^{-1}B,\ \lambda\in \rho(A)$.

Boundary system $(L,G,K)$ is well-posed if
there exist positive function $m$ and $n$ on $R^+$ such that
$$\|x(t)\|+\|y\|_{L^{p}([0,t],Y)}\leq m(t)\|x(0)\|+n(t)\|u\|_{L([0,t],U)},\ t\geq 0.$$
It is regular
if it is well-posed and the strong limit of the transform function exists, that is, $\lim_{\lambda\rightarrow
+\infty}KD_{\lambda,L,G} u$ exists for any $u\in U$. In this case, denote by $\overline{K}_{A,B}$ the corresponding feedthrough operator, which means $\overline{K}_{A,B}u:=\lim_{\lambda\rightarrow +\infty}D_{\lambda,L,G} u,\ u\in U$. Then boundary
system $(L,G,K)$ is regular with generator $(A,B,C,\overline{K}_{A,B})$. We also say that the generator is $(A,B,K,\overline{K}_{A,B})$ and denote $K_\Lambda^A$ by $C_\Lambda^A=(K|_{D(A)})_\Lambda^A$.

The following two lemmas will be used in the next section.
\begin{lemma}\label{1output}\cite{Mei2015}
Assume that the boundary control system
\begin{align*}
  (BCS)\left\{
   \begin{array}{ll}
   \dot{z}(t)&=Lz(t)\\
     Gz(t)&=u(t)
   \end{array}
 \right.
\end{align*}
is an abstract linear control system generated by
$(\mathbb{A},\mathbb{B})$. Then the boundary system $(L,G,Q)$ is a
regular linear system on $(X,U,Y)$ if and only if
$(\mathbb{A},\mathbb{B},Q)$ generates a regular linear system. In
this case, for any $z\in Z$, we have
$$Qz=Q_\Lambda^\mathbb{A} z+\overline{Q} Gz.$$
\end{lemma}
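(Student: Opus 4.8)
The plan is to show that the boundary system $(L,G,Q)$ coincides with the abstract system whose generating operators are $(\mathbb{A},\mathbb{B},Q|_{D(\mathbb{A})})$ and whose transform function is $\lambda\mapsto QD_{\lambda,L,G}$, and then to read off both the equivalence and the identity from this identification. Throughout write $\mathbb{A}=L|_{Ker\{G\}}$, $\mathbb{B}=(\mu-\mathbb{A}_{-1})D_{\mu,L,G}$ (for a fixed $\mu\in\rho(\mathbb{A})$) and $C=Q|_{D(\mathbb{A})}$, so that ``$(\mathbb{A},\mathbb{B},Q)$ generates a regular linear system'' is to be read as ``$(\mathbb{A},\mathbb{B},C)$ generates a regular linear system with transform function $QD_{\lambda,L,G}$''; and use Greiner's decomposition \cite{Greiner1987} $D(L)=D(\mathbb{A})\oplus Ker\{\mu-L\}$ with $Ker\{\mu-L\}=\mathrm{ran}\,D_{\mu,L,G}$, as in the paragraph preceding the lemma.

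First I would recall, as established there, that for $z(0)\in X$ and $u\in W^{2,p}(R^+,U)$ satisfying the compatibility condition the classical solution of $\dot z=Lz,\ Gz=u$ is $z(t)=T(t)z(0)+\Phi_{\mathbb{A},\mathbb{B}}(t)u\in D(L)$ with $Gz(t)=u(t)$, and that, splitting $z(t)=\bigl(z(t)-D_{\mu,L,G}u(t)\bigr)+D_{\mu,L,G}u(t)$ with the first summand in $D(\mathbb{A})$,
$$y(t)=Qz(t)=C_\Lambda^{\mathbb{A}}\bigl(z(t)-(\mu-\mathbb{A}_{-1})^{-1}\mathbb{B}u(t)\bigr)+QD_{\mu,L,G}u(t),$$
which is exactly the output representation recalled in Section~2 for the abstract system generated by $(\mathbb{A},\mathbb{B},C)$ with transform function $QD_{\mu,L,G}$. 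Hence the continuous-dependence estimate defining well-posedness of $(L,G,Q)$ holds if and only if $C$ is $p$-admissible for $\mathbb{A}$ (take $u\equiv 0$, $z(0)\in D(\mathbb{A})$, and use $QT(t)z(0)=CT(t)z(0)$) and the induced input-output map is bounded, i.e.\ if and only if $(\mathbb{A},\mathbb{B},Q)$ generates a well-posed system; and, the transform function being the same $QD_{\lambda,L,G}$ in both descriptions, $(L,G,Q)$ is regular if and only if $QD_{\lambda,L,G}u$ converges in $Y$ as $\lambda\to+\infty$ for every $u\in U$, i.e.\ if and only if $(\mathbb{A},\mathbb{B},Q)$ generates a regular system; in that case $\overline Q u:=\lim_{\lambda\to+\infty}QD_{\lambda,L,G}u$ is the common feedthrough operator.

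For the identity, assume the equivalent conditions hold and fix $z\in Z=D(L)$, written $z=z_1+z_2$ with $z_1\in D(\mathbb{A})$ and $z_2=D_{\mu,L,G}(Gz)=(\mu-\mathbb{A}_{-1})^{-1}\mathbb{B}(Gz)$; then $Gz_1=0$ and $Gz_2=Gz$. Since the $\Lambda$-extension extends $C$, $z_1\in D(Q_\Lambda^{\mathbb{A}})$ and $Q_\Lambda^{\mathbb{A}}z_1=Cz_1=Qz_1$. Because the transform function is $QD_{\lambda,L,G}$, the general identity for regular systems — obtained by comparing the output representation of Section~2 with the representation $y=C_\Lambda^{\mathbb{A}}x+\overline Q u$ — reads $QD_{\mu,L,G}v=C_\Lambda^{\mathbb{A}}(\mu-\mathbb{A}_{-1})^{-1}\mathbb{B}v+\overline Q v$ for $v\in U$, and in particular $(\mu-\mathbb{A}_{-1})^{-1}\mathbb{B}v\in D(C_\Lambda^{\mathbb{A}})$. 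With $v=Gz$ this gives $z_2\in D(Q_\Lambda^{\mathbb{A}})$ and $Q_\Lambda^{\mathbb{A}}z_2=QD_{\mu,L,G}(Gz)-\overline Q Gz=Qz_2-\overline Q Gz$. Adding the two contributions, $z\in D(Q_\Lambda^{\mathbb{A}})$ and $Q_\Lambda^{\mathbb{A}}z=Qz_1+Qz_2-\overline Q Gz=Qz-\overline Q Gz$, i.e.\ $Qz=Q_\Lambda^{\mathbb{A}}z+\overline Q Gz$ (which also shows $Z\subseteq D(Q_\Lambda^{\mathbb{A}})$).

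I expect the main obstacle to be the identification underlying the first two paragraphs: one has to check carefully that the output map and input-output map of $(L,G,Q)$ — a priori defined only on smooth compatible data and extended by density — agree with the maps $\Psi(\infty)$ and $F(\infty)$ of the abstract system generated by $(\mathbb{A},\mathbb{B},C)$, and in particular that its transform function is genuinely $QD_{\lambda,L,G}$ and not a constant perturbation of it. This rests on careful bookkeeping of the relevant domains ($\mathrm{ran}\,D_{\mu,L,G}\subset Ker\{\mu-L\}\subset D(L)$, $D(\mathbb{A})\subset D(C_\Lambda^{\mathbb{A}})$, and the compatibility conditions) together with the density argument; once that is secured, the two equivalences and the limit computation of the last paragraph are routine.
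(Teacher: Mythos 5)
This lemma is quoted from \cite{Mei2015} and the paper gives no proof of it, so there is nothing in the text to compare against; judged on its own, your argument is the natural one and is correct in outline. You use exactly the machinery the paper sets up before the lemma (Greiner's decomposition $D(L)=D(\mathbb{A})\oplus Ker\{\mu-L\}$, the identification $\mathbb{B}=(\mu-\mathbb{A}_{-1})D_{\mu,L,G}$, the output representation $y=C^{\mathbb{A}}_\Lambda\bigl(x-(\mu-\mathbb{A}_{-1})^{-1}\mathbb{B}u\bigr)+G(\mu)u$, and Weiss's characterization $C^{\mathbb{A}}_\Lambda(\mu-\mathbb{A}_{-1})^{-1}\mathbb{B}=G(\mu)-\overline{Q}$ for regular systems), and the splitting $z=z_1+D_{\mu,L,G}Gz$ together with linearity of the $\Lambda$-extension does yield $Qz=Q_\Lambda^{\mathbb{A}}z+\overline{Q}Gz$ on all of $Z$. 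The only substantive work you leave open is the one you flag yourself — the density argument identifying the boundary system's output and input-output maps, defined on smooth compatible data, with $\Psi(\infty)$ and $F(\infty)$ of the abstract system — which is indeed where the content of the cited result lies.
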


\begin{lemma}\label{1boundarycontrol}\cite{Mei2015}
Assume that the boundary system $(L,G,Q)$ is a regular linear system generated by $(A,B,Q,\overline{Q}_{A,B})$ on $(X,U,X)$ with admissible feedback operator $I$. Then the system
\begin{align*}
  (OS)\left\{
   \begin{array}{ll}
   \dot{z}(t)&=Lz(t)\\
     Gz(t)&=Qz(t)+v(t)\\
   \end{array}
 \right.
\end{align*}
is an abstract linear control system generated by
$(A^I,B^I).$
\end{lemma}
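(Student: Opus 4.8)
The plan is to eliminate the feedback term $Qz(t)$ in the boundary condition of $(OS)$ by exploiting the regularity of $(L,G,Q)$, to recognise the resulting equation as the abstract control equation of the closed-loop system $\Sigma^{I}$ provided by Theorem~\ref{relation}, and then to read off the generator. Throughout, write $A=L|_{\mathrm{Ker}\,G}$ and $B=(\lambda-A_{-1})D_{\lambda,L,G}\in L(U,X_{-1})$. Since $(L,G,Q)$ is regular with generator $(A,B,Q,\overline{Q}_{A,B})$, the pair $(A,B)$ generates the abstract control system associated with $\dot z=A_{-1}z+B\,Gz$, the triple $(A,B,Q)$ generates a regular linear system, and, by Lemma~\ref{1output}, every $z\in Z=D(L)$ obeys $Qz=Q_\Lambda^{A}z+\overline{Q}_{A,B}\,Gz$. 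Because $I$ is an admissible feedback operator for the regular system $\Sigma$ generated by $(A,B,Q,\overline{Q}_{A,B})$, the operator $I-\overline{Q}_{A,B}$ is invertible (the standard fact that an admissible feedback operator of a regular system renders $I-D\Gamma$ invertible, cf. Weiss~\cite{Weiss1994b}), so Theorem~\ref{relation} applies with $\Gamma=I$ and $D=\overline{Q}_{A,B}$.

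The substitution step goes as follows. If $z(\cdot)$ solves $(OS)$ then $z(t)\in Z$ and, by the identity of Lemma~\ref{1output},
$$Gz(t)=Qz(t)+v(t)=Q_\Lambda^{A}z(t)+\overline{Q}_{A,B}\,Gz(t)+v(t),$$
so that $(I-\overline{Q}_{A,B})Gz(t)=Q_\Lambda^{A}z(t)+v(t)$, i.e.
$$Gz(t)=(I-\overline{Q}_{A,B})^{-1}\big(Q_\Lambda^{A}z(t)+v(t)\big).$$
Feeding this into $\dot z(t)=A_{-1}z(t)+B\,Gz(t)$ yields
$$\dot z(t)=\big(A_{-1}+B(I-\overline{Q}_{A,B})^{-1}Q_\Lambda^{A}\big)z(t)+B(I-\overline{Q}_{A,B})^{-1}v(t),$$
which, after inserting $J^{A,A^{I}}$ to pass from $X_{-1}^{A}$ into $X_{-1}^{A^{I}}$, is precisely the abstract control equation $\dot z=A^{I}_{-1}z+B^{I}v$ of $\Sigma^{I}$ in Theorem~\ref{relation}. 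Conversely, one verifies that a classical solution of the latter equation satisfies the boundary condition of $(OS)$: from $z(t)\in D(A^{I})\subseteq D(Q_\Lambda^{A})$ together with $A_{-1}z(t)+B\,Gz(t)\in X$ one recovers $Gz(t)=Q_\Lambda^{A}z(t)+\overline{Q}_{A,B}Gz(t)+v(t)=Qz(t)+v(t)$ by running the algebra backwards.

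It then remains to invoke Theorem~\ref{relation} with $\Gamma=I$: it guarantees that $\Sigma^{I}$ is a well-posed linear system generated by $(A^{I},B^{I},C^{I})$, so in particular $(A^{I},B^{I})$ generates an abstract linear control system $(T^{I},\Phi^{I})$ with $\Phi^{I}=\Phi_{A^{I},B^{I}}$, and the trajectory $z(t)=T^{I}(t)z(0)+\Phi^{I}(t)v$ satisfies the composition identity required in Definition~\ref{wellposed}(ii). By the equivalence established in the previous paragraph, this trajectory is exactly the solution of $(OS)$, and hence $(OS)$ is an abstract linear control system generated by $(A^{I},B^{I})$.

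The step I expect to be the main obstacle is making the equivalence of the two solution concepts fully rigorous: one must track the domains ($z(t)\in D(L)$ and finiteness of $Q_\Lambda^{A}z(t)$), match the compatibility condition $A_{-1}z(0)+B\,Gz(0)\in X$ with $z(0)\in D(A^{I})$, identify the admissible class of inputs $v$, and, most delicately, handle the change of extrapolation space built into $B^{I}=J^{A,A^{I}}(I-\overline{Q}_{A,B})^{-1}B$. Checking that admissibility of the feedback operator $I$ forces $I-\overline{Q}_{A,B}$ to be invertible is a smaller but indispensable technical point.
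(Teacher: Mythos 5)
Your overall route---use Lemma \ref{1output} to rewrite the boundary condition $Gz=Qz+v$ as $(I-\overline{Q}_{A,B})Gz=Q_\Lambda^A z+v$, substitute into $\dot{z}=A_{-1}z+BGz$, and recognise the result as the closed-loop control system of Theorem \ref{relation} with $\Gamma=I$ and $D=\overline{Q}_{A,B}$---is exactly the intended one. The paper itself contains no proof of this lemma (it is quoted from \cite{Mei2015}), but this is precisely how the surrounding results (Theorems \ref{fedin} and \ref{fedin1}) deploy it, so there is nothing to object to in the architecture, and your closing list of the technical points that need care (matching the two solution concepts, the domain conditions, the change of extrapolation space hidden in $J^{A,A^I}$) is the right list.

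The one step you cannot justify as written is the invertibility of $I-\overline{Q}_{A,B}$. You call it ``the standard fact that an admissible feedback operator of a regular system renders $I-D\Gamma$ invertible, cf.\ Weiss''. That is not a theorem of \cite{Weiss1994b}: admissibility of $\Gamma$ gives invertibility of $I-F(t)\Gamma$ and of $I-G(\lambda)\Gamma$ on a right half-plane with uniformly bounded inverses, from which one gets that $I-D\Gamma$ (the strong limit of $I-G(\lambda)\Gamma$) is injective and bounded below, but not that it is surjective; whether admissibility alone forces invertibility of $I-D\Gamma$ is left open in Weiss's paper, and this is exactly why Theorem \ref{relation} above carries ``Suppose that $I-D\Gamma$ is invertible'' as a separate hypothesis. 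So either the lemma must be read with invertibility of $I-\overline{Q}_{A,B}$ as an implicit standing assumption---which is how this paper treats it, since the proof of Theorem \ref{fedin1} simply writes ``Observe that $I-\overline{Q}_{A,B}$ is invertible'' without argument---or you must supply a reason covering surjectivity (e.g.\ $U$ finite dimensional, $\overline{Q}_{A,B}=0$ as in all the examples here, or norm rather than strong convergence of $G(\lambda)$ to $D$). Everything else in your proposal goes through once that hypothesis is in place.
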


\section{Main Results}
In this section, we shall obtain the admissible controllability, exact controllability, admissible observation, exact observation and regularity under some regular perturbations. Moreover, all the perturbation results are used to solve the corresponding boundary systems. The systems governed by specific partial differential equations are presented after every perturbation result to illustrate our results.

We first consider the admissible controllability and exactly
controllability under associated perturbation.
To prove the robustness of exact controllability, we introduce the following important lemma related to radius of surjectivity.
\begin{lemma}\label{Kolmogorov1980}\cite[page 227]{Kolmogorov1980}
Let $E$ and $F$ be Banach spaces. Then, $\mathfrak{S}(E,F):=\{\Xi\in L(E,F): \hbox{ }\Xi \hbox{ }is \hbox{ }
  surjective\}$ is an
open set in $L(E,F)$, i.e., given $\Pi\in\mathfrak{S}(E,F)$, there
exists $\alpha>0$ such that
\begin{eqnarray*}
  \{\Xi \in L(E,F):\|\Pi-\Xi\|<\alpha\}\subset
  \mathfrak{S}(E,F).
\end{eqnarray*}
The constant $\alpha$ is called {\it radius of surjectivity} of
$\Pi$.
\end{lemma}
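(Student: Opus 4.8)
The "final statement" in the excerpt is Lemma 3.something (labeled \texttt{Kolmogorov1980}), which the excerpt attributes to \cite[page 227]{Kolmogorov1980} as a known fact. So rather than a fresh invention, I will sketch the standard proof that the set of surjective bounded operators between Banach spaces is open.

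\medskip

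\textbf{Proof strategy.} The plan is to reduce openness of $\mathfrak{S}(E,F)$ to a quantitative version of the open mapping theorem together with a Neumann-series perturbation argument. First I would fix $\Pi\in\mathfrak{S}(E,F)$. By the open mapping theorem $\Pi$ maps the open unit ball of $E$ onto an open neighbourhood of $0$ in $F$, so there is a constant $c>0$ with $\overline{B_F(0,c)}\subseteq \Pi(\overline{B_E(0,1)})$; concretely, for every $y\in F$ there exists $x\in E$ with $\Pi x=y$ and $\|x\|\le c^{-1}\|y\|$. This is the quantitative surjectivity bound that everything else hangs on. Then I would set $\alpha:=c$ (the radius of surjectivity) and take any $\Xi\in L(E,F)$ with $\|\Pi-\Xi\|<\alpha$.

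\medskip

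Next I would show $\Xi$ is surjective by an iterative (successive-approximation) construction. Given $y\in F$, build a sequence $x_n$ as follows: pick $x_0$ with $\Pi x_0=y$ and $\|x_0\|\le c^{-1}\|y\|$; having chosen $x_0,\dots,x_n$, let $r_n:=y-\Xi(x_0+\cdots+x_n)$ and pick $x_{n+1}$ with $\Pi x_{n+1}=r_n$ and $\|x_{n+1}\|\le c^{-1}\|r_n\|$. Writing $q:=\|\Pi-\Xi\|/c<1$, one checks $r_n=r_{n-1}-\Xi x_n=\Pi x_n-\Xi x_n=(\Pi-\Xi)x_n$, hence $\|r_n\|\le \|\Pi-\Xi\|\,\|x_n\|\le q\|r_{n-1}\|$, so $\|r_n\|\le q^{n}\|y\|$ and $\|x_{n+1}\|\le c^{-1}q^{n}\|y\|$. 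Therefore $x:=\sum_{n\ge0}x_n$ converges absolutely in the Banach space $E$ (geometric majorant), and continuity of $\Xi$ gives $\Xi x=\lim_{N}\Xi(\sum_{n=0}^{N}x_n)=\lim_N (y-r_N)=y$. Since $y$ was arbitrary, $\Xi$ is surjective, i.e. $\Xi\in\mathfrak{S}(E,F)$, proving $\{\Xi:\|\Pi-\Xi\|<\alpha\}\subseteq\mathfrak{S}(E,F)$.

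\medskip

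\textbf{Main obstacle.} The only real subtlety is extracting the \emph{uniform} bound $\|x\|\le c^{-1}\|y\|$ from the open mapping theorem — the open mapping theorem as usually stated only says the image of an open set is open, and one must run the standard Baire-category argument once more to upgrade "$\Pi(B_E(0,1))\supseteq B_F(0,2c)$" (the form with the factor-of-two loss coming from the completeness/series step inside OMT) to the clean statement above; after that the perturbation argument is routine. An alternative packaging, which avoids re-deriving the quantitative OMT, is: by OMT $\Pi$ induces a topological isomorphism $\widetilde{\Pi}\colon E/\ker\Pi\to F$; then for $\|\Pi-\Xi\|$ small the induced map $E/\ker\Pi\to F$, $[x]\mapsto \Xi x$ need not be well defined, so instead one argues $\Xi\widetilde{\Pi}^{-1}=I_F-(\Pi-\Xi)\widetilde{\Pi}^{-1}$ is invertible in $L(F)$ by Neumann series as soon as $\|\Pi-\Xi\|<\|\widetilde{\Pi}^{-1}\|^{-1}=:\alpha$, whence $\Xi$ is surjective because $\Xi\widetilde{\Pi}^{-1}$ already is. I would present the Neumann-series version as the clean writeup and relegate the successive-approximation version to a remark, since the former makes the identification of $\alpha$ with the reciprocal norm of the inverse-isomorphism most transparent.
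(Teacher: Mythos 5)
The paper gives no proof of this lemma at all: it is imported verbatim from Kolmogorov--Fomin \cite[p.~227]{Kolmogorov1980} and used as a black box in Theorem \ref{across}. So the only question is whether your argument stands on its own, and your primary argument does. Extracting from the open mapping theorem a constant $c>0$ such that every $y\in F$ admits a preimage $x$ with $\Pi x=y$ and $\|x\|\le c^{-1}\|y\|$, setting $\alpha=c$, and running the successive-approximation scheme with ratio $q=\|\Pi-\Xi\|/c<1$ is the standard complete proof; your bookkeeping ($r_n=(\Pi-\Xi)x_n$, $\|r_n\|\le q^{n}\|y\|$, absolute convergence of $\sum x_n$ in the Banach space $E$, and $\Xi x=\lim(y-r_N)=y$) is correct, and the quantitative form of the open mapping theorem you need is indeed routine (at worst one shrinks $c$ by a harmless factor).

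The one genuine problem is the ``alternative packaging'' that you announce you would promote to the clean writeup. The operator $\Xi\widetilde{\Pi}^{-1}$, and likewise $(\Pi-\Xi)\widetilde{\Pi}^{-1}$, is not a well-defined element of $L(F)$: $\widetilde{\Pi}^{-1}y$ is a coset in $E/\ker\Pi$, and neither $\Xi$ nor $\Pi-\Xi$ need annihilate $\ker\Pi$. You name exactly this obstruction one clause earlier and then write the Neumann identity anyway; it cannot be rescued by choosing a bounded linear right inverse of $\Pi$, since that would force $\ker\Pi$ to be complemented in $E$, which fails in general Banach spaces. The selection $y\mapsto x$ supplied by the open mapping theorem is only positively homogeneous, not linear, and the iteration in your first argument is precisely the device that compensates for this --- so keep the successive-approximation proof as the main text. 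If you want a genuinely slick alternative, use duality: $\Xi$ is surjective if and only if $\|\Xi^{*}f\|\ge c\|f\|$ for all $f\in F^{*}$ and some $c>0$, a lower bound that is visibly stable under perturbations of norm less than $c$.
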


\begin{theorem}\label{across}
Let $(A,B,C,D)$ generate a regular linear system with admissible feedback operator $I$ on $(X,Y,Y)$, $I-D$ is invertible, and $(A,\Delta B,C,P)$ generate a regular linear system on $(X,U,Y)$. Then $(A^I,J^{A,A^I}B(I-D)^{-1}P+J^{A,A^I}\Delta B,(I-D)^{-1}C_\Lambda^A)$ generates a regular linear system,
and there holds
$$\Phi_{A^I,J^{A,A^I}B(I-D)^{-1}P+J^{A,A^I}\Delta B}=\Phi_{A,B}(I-F_{A,B,C,D})^{-1}F_{A,\Delta B,C,P}+\Phi_{A,\Delta B},$$
where $A^I=(A+B(I-D)^{-1}C_\Lambda^A)|_X.$
Moreover, if $(A,\Delta B)$ is exactly controllable at $t_0>0$,
then there exists $k_0>0$ such that $(A^I(k),J^{A,A^I(k)}B(I-kD)^{-1}kP+J^{A,A^I(k)}\Delta B)$ is also exactly controllable at $t_0$
whenever $k<k_0,$ where $A^I(k)=(A+B(I-kD)^{-1}kC_\Lambda^A)|_X.$
\end{theorem}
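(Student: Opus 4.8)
The plan is to realise the perturbed system as the restriction, to one input channel, of the closed-loop system attached to a single \emph{augmented} regular linear system, and then to read everything off from Theorem \ref{relation} and Lemma \ref{Kolmogorov1980}. First I would build the augmented system $\widetilde\Sigma$ on $(X,U\times Y,Y)$ by running $(A,\Delta B,C,P)$ and $(A,B,C,D)$ in parallel on the input side: same semigroup $T$, control operator $\widetilde B(u,w)=\Delta Bu+Bw$, observation operator $C$, feedthrough $\widetilde D(u,w)=Pu+Dw$, with input, output and input--output maps $\widetilde\Phi(t)(u,w)=\Phi_{A,\Delta B}(t)u+\Phi_{A,B}(t)w$, $\widetilde\Psi=\Psi_{A,C}$, $\widetilde F(t)(u,w)=F_{A,\Delta B,C,P}(t)u+F_{A,B,C,D}(t)w$. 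The four identities of Definition \ref{wellposed} pass to $\widetilde\Sigma$ by linearity from those of the two given systems, and its transfer function $(u,w)\mapsto G_{A,\Delta B,C,P}(\lambda)u+G_{A,B,C,D}(\lambda)w$ converges strongly to $\widetilde D$ as $\lambda\to+\infty$, so $\widetilde\Sigma$ is regular. For $k\ge 0$ put $\Gamma_k\colon Y\to U\times Y$, $\Gamma_k y=(0,ky)$; then $I-\widetilde F(t)\Gamma_k=I-kF_{A,B,C,D}(t)$ and $I-\widetilde D\Gamma_k=I-kD$, so $\Gamma_1$ is an admissible feedback operator for $\widetilde\Sigma$ with $I-\widetilde D\Gamma_1=I-D$ invertible, and, invertibility being an open condition, the same holds for $\Gamma_k$ and $I-kD$ when $k$ is close to $0$.

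Next I would apply Theorem \ref{relation} to $\widetilde\Sigma$ with feedback $\Gamma_k$; since closing an admissible feedback loop around a regular linear system yields a regular linear system (\cite{Weiss1994b}), $\widetilde\Sigma^{\Gamma_k}$ is a regular linear system whose generator is $A^{\Gamma_k}=(A_{-1}+\widetilde B\Gamma_k(I-kD)^{-1}C_\Lambda^A)|_X=(A_{-1}+B(I-kD)^{-1}kC_\Lambda^A)|_X=A^I(k)$, whose observation operator is $(I-kD)^{-1}C_\Lambda^A$, and whose control operator is $J^{A,A^I(k)}\widetilde B(I-\Gamma_k\widetilde D)^{-1}$. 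I would then restrict the input space of $\widetilde\Sigma^{\Gamma_k}$ to the closed subspace $U\times\{0\}$ (which preserves well-posedness and regularity) and compute $\widetilde B(I-\Gamma_k\widetilde D)^{-1}(u,0)=\widetilde B\bigl(u,(I-kD)^{-1}kPu\bigr)=\Delta Bu+B(I-kD)^{-1}kPu$, so the control operator of the restricted system is $u\mapsto J^{A,A^I(k)}\bigl(\Delta Bu+B(I-kD)^{-1}kPu\bigr)$. Because $\Delta B$ and $B(I-kD)^{-1}kP$ are admissible control operators for $A$ that are compatible with the regular perturbation $A^I(k)$, each belongs to the domain of $J^{A,A^I(k)}$ and this splits as $J^{A,A^I(k)}B(I-kD)^{-1}kP+J^{A,A^I(k)}\Delta B$. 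Taking $k=1$ gives the first assertion.

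For the input--map identity I would invoke the standard closed-loop formula $\Phi^{\Gamma}(t)=\widetilde\Phi(t)+\widetilde\Phi(t)\Gamma(I-\widetilde F(t)\Gamma)^{-1}\widetilde F(t)$ with $\Gamma=\Gamma_k$, restricted to inputs $(u,0)$; writing $\mathcal B_k:=J^{A,A^I(k)}B(I-kD)^{-1}kP+J^{A,A^I(k)}\Delta B$ and using $\widetilde F(t)\Gamma_k=kF_{A,B,C,D}(t)$, $\widetilde\Phi(t)\Gamma_k=k\Phi_{A,B}(t)$, $\widetilde F(t)(u,0)=F_{A,\Delta B,C,P}(t)u$, $\widetilde\Phi(t)(u,0)=\Phi_{A,\Delta B}(t)u$, one gets
\[
\Phi_{A^I(k),\,\mathcal B_k}(t)=\Phi_{A,\Delta B}(t)+k\,\Phi_{A,B}(t)\bigl(I-kF_{A,B,C,D}(t)\bigr)^{-1}F_{A,\Delta B,C,P}(t),
\]
which at $k=1$ is exactly the stated formula. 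For the robustness of exact controllability, the same identity gives $\|\Phi_{A^I(k),\mathcal B_k}(t_0)-\Phi_{A,\Delta B}(t_0)\|\le k\,\|\Phi_{A,B}(t_0)\|\,\|(I-kF_{A,B,C,D}(t_0))^{-1}\|\,\|F_{A,\Delta B,C,P}(t_0)\|$, and the right-hand side tends to $0$ as $k\to0$ (the middle factor stays bounded once $k\|F_{A,B,C,D}(t_0)\|\le\tfrac12$). If $\Phi_{A,\Delta B}(t_0)$ is surjective, Lemma \ref{Kolmogorov1980} furnishes a radius of surjectivity $\alpha>0$; choosing $k_0>0$ so that the above bound is $<\alpha$ whenever $k<k_0$ forces $\Phi_{A^I(k),\mathcal B_k}(t_0)$ to be surjective, i.e.\ $(A^I(k),\mathcal B_k)$ to be exactly controllable at $t_0$.

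The step I expect to be the main obstacle is the pair of steps above that set up and exploit the augmented system: assembling $\widetilde\Sigma$ as a bona fide regular linear system and, above all, keeping track of the extrapolation spaces $X_{-1}^A$ and $X_{-1}^{A^I(k)}$ while pushing the data through the feedback formulas of Theorem \ref{relation} --- in particular, justifying rigorously that $\Delta B$ and $B(I-kD)^{-1}kP$ each lie in the domain of $J^{A,A^I(k)}$ so that the control operator of the restricted closed-loop system really does decompose into the two summands displayed in the statement. Once that identification is secured, the input--map formula and the radius-of-surjectivity argument are routine.
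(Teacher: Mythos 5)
Your proposal is correct and follows essentially the same route as the paper: the paper also assembles an augmented regular system with control operator $\tilde B=(B,\Delta B)$ and feedthrough $\tilde D=\bigl(\begin{smallmatrix} D & P\\ 0 & 0\end{smallmatrix}\bigr)$ (padding the output with a zero row so that the identity is the admissible feedback, where you instead keep the output $Y$ and feed back through the injection $\Gamma_k$ — the same construction in different clothing), then applies Theorem \ref{relation} to read off $A^I$, the control operator and the input-map identity, and finally uses Lemma \ref{Kolmogorov1980} with exactly your norm estimate to get robustness of exact controllability. The point you flag as the main obstacle (the domain of $J^{A,A^I(k)}$ and the splitting of the closed-loop control operator) is handled in the paper at the same level of detail as in your sketch, so nothing essential is missing.
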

\begin{proof}\ \
We consider the
operators $\tilde{B}:=(B, \Delta B): Y\times U\rightarrow X_{-1}$,
$\tilde{C}=\left(
                                                          \begin{array}{c}
                                                            C \\
                                                            0 \\
                                                          \end{array}
                                                        \right)
: X_1\rightarrow Y\times U$, $\tilde{D}=\left(
                                  \begin{array}{cc}
                                    D & P \\
                                    0 & 0 \\
                                  \end{array}
                                \right):Y\times U\rightarrow
                                Y\times U$.

Since $(A,B,C,D)$ and $(A,\Delta B,C,P)$ generate regular linear systems, it is easy to verify that
$(A,\tilde{B},\tilde{C},\tilde{D})$ generates a regular linear system given by
\begin{eqnarray*}
  \Sigma_{A,\tilde{B},\tilde{C}}:=\left(
                                    \begin{array}{cc}
                                      T & (\Phi_{A,B},\Phi_{A,\Delta B}) \\
                                      \left(
                       \begin{array}{c}
                         \Psi_{A,C} \\
                         0 \\
                       \end{array}
                     \right)& \left(
                                 \begin{array}{cc}
                                   F_{A,B,C,D} & F_{A,\Delta B,C,P} \\
                                   0 & 0 \\
                                 \end{array}
                               \right)
                      \\
                                    \end{array}
                                  \right).
\end{eqnarray*}
Observe that $I$ is an admissible feedback operator for $\Sigma_{A,B,C,D}$. We have that $$I_{Y\times U}-\left(
                                       \begin{array}{cc}
                                         F_{A,B,C,D} & F_{A,\Delta B,C,P} \\
                                         0 & 0 \\
                                       \end{array}
                                     \right)
=\left(
                                       \begin{array}{cc}
                                         I-F_{A,B,C,D} & -F_{A,\Delta B,C,P} \\
                                          0& I \\
                                       \end{array}
                                     \right)$$ is invertible
and
$$\bigg(I_{Y\times U}-\left(
                                       \begin{array}{cc}
                                         F_{A,B,C,D} &F_{A,\Delta B,C,P} \\
                                         0  & 0 \\
                                       \end{array}
                                     \right)\bigg)^{-1}=\left(
                                       \begin{array}{cc}
                                         (I-F_{A,B,C,D})^{-1} & (I-F_{A,B,C,D})^{-1}F_{A,\Delta B,C,P} \\
                                         0 & I \\
                                       \end{array}
                                     \right),$$
that is, $I_{X\times U}$ is an admissible feedback operator for $\Sigma_{A,\tilde{B},\tilde{C},\tilde{D}}$. It follows from Theorem \ref{relation} that $A^{I_{X\times U}}=(A_{-1}+\tilde{B}(I-\tilde{D})\tilde{C}_\Lambda^{\tilde{A}})|_X
=(A_{-1}+B(I-D)^{-1}C_\Lambda^A)|_X=A^I$,
$\tilde{B}^{I_{X\times U}}=J^{A,A^I}\tilde{B}(I-\tilde{D})^{-1}=\left(
                                                \begin{array}{cc}
                                                  J^{A,A^I}B(I-D)^{-1} & J^{A,A^I}B(I-D)^{-1}P+J^{A,A^I}\Delta B \\
                                                \end{array}
                                              \right)
$, $\tilde{C}^I_{X\times U}=(I-\tilde{D})^{-1}\tilde{C}_\Lambda^{A}=
\left(
  \begin{array}{c}
    (I-D)^{-1}C_\Lambda^A \\
    0 \\
  \end{array}
\right)
$
and
\begin{align*}
 &\Phi_{A^I,J^{A,A^I}B(I-D)^{-1}P+J^{A,A^I}\Delta B}\\
=&\Phi_{A^I,\tilde{B}^{I_{X\times U}}}\left(
                                       \begin{array}{c}
                                         0 \\
                                         I \\
                                       \end{array}
                                     \right)\\
=&\Phi_{A,\tilde{B}}(I-F_{A,\tilde{B},\tilde{C},\tilde{D}})^{-1}
\left(
                                       \begin{array}{c}
                                         0 \\
                                         I \\
                                       \end{array}
                                     \right)\\
=& (\Phi_{A,B},\Phi_{A,\Delta B})
\left(
                                       \begin{array}{cc}
                                         (I-F_{A,B,C,D})^{-1} & (I-F_{A,B,C,D})^{-1}F_{A,\Delta B,C,P} \\
                                         0 & I \\
                                       \end{array}
                                     \right)
\left(
                                       \begin{array}{c}
                                         0 \\
                                         I \\
                                       \end{array}
                                     \right)\\
=&\Phi_{A,B}(I-F_{A,B,C,D})^{-1}F_{A,\Delta B,C,P}+\Phi_{A,\Delta B}.
\end{align*}
Moreover, it is not hard to see that
$$(A^I,J^{A,A^I}B(I-D)^{-1}P+J^{A,A^I}\Delta B,(I-D)^{-1}C_\Lambda^A)=\bigg(A^I,\tilde{B}^{I_{X\times U}}\left(
                                                            \begin{array}{c}
                                                              0 \\
                                                              I \\
                                                            \end{array}
                                                          \right)
,\left(
   \begin{array}{c}
     I \\
     0 \\
   \end{array}
 \right)
\tilde{C}^{I_{X\times U}})
\bigg)
$$ is a regular linear system.

Below we prove the robustness of exact controllability.
Observe that $I-kD$ is invertible for any $k<\frac{1}{\|D\|}$ (if $\|D\|=0$, $\frac{1}{\|D\|}=+\infty$).
By \cite[Proposition 3.12 and Proposition 4.10]{Weiss1994b}, $kI$ is admissible feedback for $(A,B,C,D)$ whenever for $k<\frac{1}{\|D\|}$, which indicates that $I$ is admissible feedback for $(A,B,kC,kD)$.
Since $(A,\Delta B)$ is exactly controllable at $t_0>0$,
$\Phi_{A,\Delta B}(t_0)$ is surjective. Let $s_0$ be the radius of surjectivity of $\Phi_{A,\Delta B}(t_0)$. It follows from the above proof that
\begin{align*}
    &\|\Phi_{A^I,J^{A,A^I(k)}B(I-kD)^{-1}kP+J^{A,A^I(k)}\Delta B}(t_0)-\Phi_{A,\Delta B}(t_0)\|\\
=&\|\Phi_{A,B}(t_0)(I-F_{A,B,kC,kD}(t_0))^{-1}F_{A,\Delta B,kC,kP}(t_0)\|\\
\leq &k\|\Phi_{A,B}(t_0)(I-kF_{A,B,C,D}(t_0))^{-1}\|\|F_{A,\Delta B,C,P}(t_0)\|.
\end{align*}
Let $$k_0=\min\{\frac{1}{\|D\|},\frac{1}{\|F_{A,B,C,D}(t_0)\|},
\frac{s_0}{\|\Phi_{A,B}\|\|F_{A,B,C,P}(t_0)\|+s_0\|F_{A,B,C,D}(t_0)\|} \}.$$
Then $\|\Phi_{A^I,J^{A,A^I(k)}B(I-kD)^{-1}kP+J^{A,A^I(k)}\Delta B}(t_0)-\Phi_{A,\Delta B}(t_0)\|<s_0$ whenever $k<k_0$. It follows from Lemma \ref{Kolmogorov1980} that $\Phi_{A^I,J^{A,A^I(k)}B(I-kD)^{-1}kP+J^{A,A^I(k)}\Delta B}(t_0)$ is surjective. This implies that  $(A^I(k),J^{A,A^I(k)}B(I-kD)^{-1}kP+J^{A,A^I(k)}\Delta B)$ is exactly controllability at $t_0$. The proof is therefore completed.
\end{proof}
\begin{remark}
In the special case that $Y=X,$ $D=0,$ $P=0$ and $C=I$, the above theorem says that both $B$ and $\Delta B$ being admissible for $A$ implies that $J^{A,(A_{-1}+B)|_X}\Delta B$ is admissible for $(A_{-1}+B)|_X$, such result has been proved by Hadd \cite{Hadd2006}, as mentioned in the introduction section.
If $Y=X,$ $D=0,$ $P=0$ and $B=I$, Theorem \ref{across} tells that $(A,\Delta B,C)$ generating a regular linear system implies that $(A+C,J^{A,A+C}\Delta B,C)$ generates a regular linear system, particularly, $J^{A,A+C}\Delta B$
is admissible for $A+C$, such result has been proved by Mei and Peng \cite{Mei2010a}.
This means that our result is a generalization of \cite{Hadd2006} and \cite{Mei2010a}.
\end{remark}

\begin{theorem}\label{fedin}
Assume that the boundary system
$  \left\{
   \begin{array}{ll}
   \dot{z}(t)&=Lz(t)\\
     G_1z(t)&=u(t)\\
     G_2z(t)&=0\\
     y(t)&=Kz(t)
   \end{array}
 \right.$
is a regular linear system generated by $(A,B_1,K,\overline{K}_{A,B_1})$ on $(X,U,U)$ with $I$ being admissible feedback operator. Suppose that
 $  \left\{
   \begin{array}{ll}
   \dot{z}(t)&=Lz(t)\\
     G_1z(t)&=0\\
     G_2z(t)&=v(t)\\
     y(t)&=Kz(t)
   \end{array}
 \right.$
 is regular linear system on $(X,V,U)$ with control operator $B_2$.
Then
$  \left\{
   \begin{array}{ll}
   \dot{z}(t)&=Lz(t)\\
     G_1z(t)&=Kz(t)\\
     G_2z(t)&=v(t)
   \end{array}
 \right.
$ is an abstract linear control system generated by
$(A^I,J^{A,A^I}B_1(I-\overline{K}_{A,B_1})^{-1}\overline{K}_{A,B_2}+J^{A,A^I}B_2).$
If $  \left\{
   \begin{array}{ll}
   \dot{z}(t)&=Lz(t)\\
     G_1z(t)&=0\\
     G_2z(t)&=v(t)
   \end{array}
 \right.
$ is exactly controllable at $t_0$, there exists $k_0>0$ such that $  \left\{
   \begin{array}{ll}
   \dot{z}(t)&=Lz(t)\\
     G_1z(t)&=kKz(t)\\
     G_2z(t)&=v(t)
   \end{array}
 \right.
$ is exactly controllable at $t_0$ whenever $k<k_0.$
\end{theorem}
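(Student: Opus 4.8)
The plan is to merge the two boundary controls $u$ (through $G_1$) and $v$ (through $G_2$) into a single control $(u,v)\in U\times V$, reduce to a distributed closed-loop system governed by Theorem~\ref{across}, and translate the conclusion back into boundary form through Lemma~\ref{1boundarycontrol}. Concretely, set $G:=(G_1,G_2):D(L)\to U\times V$. The common state operator of the two hypothesised systems is $A=L|_{\ker G_1\cap\ker G_2}=L|_{\ker G}$, and since each of them is in particular an abstract linear control system, the combined boundary control system $\{\dot z=Lz,\ G_1z=u,\ G_2z=v\}$ is an abstract linear control system generated by $(A,\widetilde B)$ with $\widetilde B=(B_1,B_2)\in L(U\times V,X_{-1})$; indeed $D_{\lambda,L,G}(u,v)$ splits additively into the two one-channel stationary solutions, so $\widetilde B(u,v)=B_1u+B_2v$. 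By Lemma~\ref{1output}, together with the fact that the transfer function of $(A,\widetilde B,K)$ is the pair of transfer functions of $(A,B_1,K)$ and $(A,B_2,K)$ (each converging strongly by hypothesis), the boundary system $(L,G,K)$ is a regular linear system generated by $(A,\widetilde B,K,\overline K)$ with $\overline K=(\overline K_{A,B_1},\overline K_{A,B_2})$.

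Next I would apply Theorem~\ref{across} with the identifications $B\rightsquigarrow B_1$, $\Delta B\rightsquigarrow B_2$, $C\rightsquigarrow K$, $D\rightsquigarrow\overline K_{A,B_1}$, $P\rightsquigarrow\overline K_{A,B_2}$ on the triples $(X,U,U)$ and $(X,V,U)$. The invertibility of $I-\overline K_{A,B_1}$ required there is available here, since for a regular system the transfer function converges strongly to the feedthrough, so admissibility of the feedback operator $I$ forces $I-\overline K_{A,B_1}$ to be invertible (the same convention is implicit in Lemma~\ref{1boundarycontrol}). Theorem~\ref{across} then shows that $\big(A^I,\ J^{A,A^I}B_1(I-\overline K_{A,B_1})^{-1}\overline K_{A,B_2}+J^{A,A^I}B_2,\ (I-\overline K_{A,B_1})^{-1}K_\Lambda^A\big)$ generates a regular linear system, where $A^I=(A_{-1}+B_1(I-\overline K_{A,B_1})^{-1}K_\Lambda^A)|_X$ is the closed loop of the first boundary system, and it provides the identity $\Phi=\Phi_{A,B_1}(I-F_{A,B_1,K,\overline K_{A,B_1}})^{-1}F_{A,B_2,K,\overline K_{A,B_2}}+\Phi_{A,B_2}$ for its input map $\Phi$.

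To identify the boundary system $\{\dot z=Lz,\ G_1z=Kz,\ G_2z=v\}$ with this control system, I would apply Lemma~\ref{1boundarycontrol} to the boundary system $(L,G,\widetilde Q)$ with $\widetilde Qz:=(Kz,0)\in U\times V$. By the first step it is a regular linear system generated by $(A,\widetilde B,\widetilde Q,\overline{\widetilde Q})$ with $\overline{\widetilde Q}=\bigl(\begin{smallmatrix}\overline K_{A,B_1}&\overline K_{A,B_2}\\0&0\end{smallmatrix}\bigr)$, and, exactly as in the proof of Theorem~\ref{across}, the block-triangular form of $I_{U\times V}-F_{(L,G,\widetilde Q)}(t)$ shows that $I_{U\times V}$ is an admissible feedback operator for it (using that $I$ is one for the first system). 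Lemma~\ref{1boundarycontrol} then gives that $\{\dot z=Lz,\ Gz=\widetilde Qz+w\}$, $w\in U\times V$, is an abstract linear control system generated by $(A^{I_{U\times V}},\widetilde B^{I_{U\times V}})$; restricting the input to $w\in\{0\}\times V$ — which is exactly $G_1z=Kz$, $G_2z=v$ — and evaluating $A^{I_{U\times V}}$ and $\widetilde B^{I_{U\times V}}$ by Theorem~\ref{relation} (using $\widetilde Q_\Lambda^A z=(K_\Lambda^A z,0)$ and the explicit inverse of $I-\overline{\widetilde Q}$) reproduces $A^I$ and $J^{A,A^I}B_1(I-\overline K_{A,B_1})^{-1}\overline K_{A,B_2}+J^{A,A^I}B_2$, which is the first assertion.

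For the robustness of exact controllability, replacing $K$ by $kK$ amounts to replacing $\widetilde Q$ by $k\widetilde Q$; for $k<1/\|\overline K_{A,B_1}\|$ the operator $I$ remains an admissible feedback operator (as in \cite{Weiss1994b}), so the same argument identifies $\{\dot z=Lz,\ G_1z=kKz,\ G_2z=v\}$ with a control system whose input map is $\Phi_{A,B_1}(I-F_{A,B_1,kK,k\overline K_{A,B_1}})^{-1}F_{A,B_2,kK,k\overline K_{A,B_2}}+\Phi_{A,B_2}$. Since exact controllability of $\{\dot z=Lz,\ G_1z=0,\ G_2z=v\}$ at $t_0$ means $\Phi_{A,B_2}(t_0)$ is surjective, letting $s_0$ be its radius of surjectivity and using the last identity to bound $\|\Phi_{\mathrm{new}}(t_0)-\Phi_{A,B_2}(t_0)\|\le k\,\|\Phi_{A,B_1}(t_0)(I-kF_{A,B_1,K,\overline K_{A,B_1}}(t_0))^{-1}\|\,\|F_{A,B_2,K,\overline K_{A,B_2}}(t_0)\|$, Lemma~\ref{Kolmogorov1980} shows $\Phi_{\mathrm{new}}(t_0)$ is surjective for all $k<k_0$, with $k_0$ the obvious analogue of the constant in Theorem~\ref{across}. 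I expect the one genuinely delicate point to be the bookkeeping in the identification step — checking that the $2\times2$ block closed-loop operators produced by Theorem~\ref{relation}, after restriction to the $V$-channel, collapse to the operators named in the statement; everything else is a direct appeal to Theorem~\ref{across} and Lemmas~\ref{1output}, \ref{1boundarycontrol} and \ref{Kolmogorov1980}.
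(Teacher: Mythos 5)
Your proposal is correct and follows essentially the same route as the paper: lift to the product input space $U\times V$ with control operator $(B_1,B_2)$, form the output $(Kz,0)$ with block feedthrough $\bigl(\begin{smallmatrix}\overline K_{A,B_1}&\overline K_{A,B_2}\\0&0\end{smallmatrix}\bigr)$, verify block admissibility of the identity feedback, apply Lemma~\ref{1boundarycontrol} and restrict to the $V$-channel, then deduce robustness of exact controllability from Theorem~\ref{across} via Lemma~\ref{Kolmogorov1980}. The only cosmetic difference is that you invoke Theorem~\ref{across} explicitly for the closed-loop identification where the paper routes the same computation through Lemma~\ref{1boundarycontrol}.
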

\begin{proof}\ \
By the assumption, $G_1:D(L)\bigcap Ker{G_2}\rightarrow
U$ and $G_2:D(L)\bigcap Ker{G_1}\rightarrow
V$ are surjectives.
This implies that for any $u\in U$ and $v\in V$,
there exist $z_1\in D(L)\bigcap Ker{G_2}$ and
$z_2\in D(L)\bigcap Ker{G_1}$ such that $G_1z_1=u,\ G_2z_2=v.$
Hence, $z_1+z_2\in D(L)$ and $\left(
           \begin{array}{c}
             G_1 \\
             G_2 \\
           \end{array}
         \right)(z_1+z_2)=\left(
                            \begin{array}{c}
                              u \\
                              v \\
                            \end{array}
                          \right).
$, that is, $\left(
           \begin{array}{c}
             G_1 \\
             G_2 \\
           \end{array}
         \right)$ are surjective.
 Moreover, $B_1=(\lambda-A_{-1})D_{1,\lambda}$ and $B_1=(\lambda-A_{-1})D_{2,\lambda}$ are indicated by the assumption. Here $D_{1,\lambda}u$ and $D_{1,\lambda}u$ are the solution of the equations
 $\left\{
    \begin{array}{ll}
      \lambda z=Lz, & \hbox{} \\
      G_1z=u, & \hbox{} \\
      G_2z=0 & \hbox{}
    \end{array}
  \right.
 $
and $\left\{
    \begin{array}{ll}
      \lambda z=Lz, & \hbox{} \\
      G_1z=0, & \hbox{} \\
      G_2z=v & \hbox{}
    \end{array}
  \right.
 $, respectively.
Hence, the solution $D_\lambda \left(
                                 \begin{array}{c}
                                   u \\
                                   v \\
                                 \end{array}
                               \right)
$ of
$\left\{
    \begin{array}{ll}
      \lambda z=Lz, & \hbox{} \\
      G_1z=u, & \hbox{} \\
      G_2z=v & \hbox{}
    \end{array}
  \right.
 $ satisfies $D_\lambda \left(
                                 \begin{array}{c}
                                   u \\
                                   v \\
                                 \end{array}
                               \right)
=\left(
   \begin{array}{cc}
     D_{1,\lambda} & D_{1,\lambda} \\
   \end{array}
 \right)\left(
                                 \begin{array}{c}
                                   u \\
                                   v \\
                                 \end{array}
                               \right).
$
So the control operator of boundary system
$\left\{
    \begin{array}{ll}
      \dot{z}(t)=Lz & \hbox{} \\
      G_1z=u(t) & \hbox{} \\
      G_2z=v(t) & \hbox{}
    \end{array}
  \right.
 $ is $B=(\lambda-A_{-1})D_\lambda
=(\lambda-A_{-1})
\left(
  \begin{array}{cc}
    D_{1,\lambda} & D_{2,\lambda} \\
  \end{array}
\right)
=\left(
   \begin{array}{cc}
     B_1 & B_2 \\
   \end{array}
 \right).$
Let $Y(t)=\left(
            \begin{array}{c}
              K \\
              0 \\
            \end{array}
          \right)z(t).
$
Since $(A,B_1,K)$ and $(A,B_2,K)$ are regular linear system,
we obtain that $\bigg(A,B,\left(
                             \begin{array}{c}
                               K \\
                               0 \\
                             \end{array}
                           \right)
\bigg)$ is a regular linear system.
Then $\left\{
    \begin{array}{ll}
      \lambda z=Lz, & \hbox{} \\
      G_1z=u, & \hbox{} \\
      G_2z=v & \hbox{}
    \end{array}
  \right.
 $ with output $Y(t)$ is a regular linear system,
the feedthrough operator $D_0$ is computed by
\begin{align*}
 D_0\left(
       \begin{array}{c}
         p \\
         q \\
       \end{array}
     \right)=&
\lim_{\lambda\rightarrow +\infty}\left(
                             \begin{array}{c}
                               K \\
                               0 \\
                             \end{array}
                           \right)(\lambda-A)^{-1}B\left(
       \begin{array}{c}
         p \\
         q \\
       \end{array}
     \right)\\
=&\left(
               \begin{array}{c}
                 \lim_{\lambda\rightarrow +\infty}K(\lambda-A)^{-1}B_1p+\lim_{\lambda\rightarrow +\infty}K(\lambda-A)^{-1}B_2q \\
                 0 \\
               \end{array}
             \right)\\
=& \left(
     \begin{array}{cc}
       \overline{K}_{A,B_1} & \overline{K}_{A,B_2} \\
       0 & 0 \\
     \end{array}
   \right)
\left(
       \begin{array}{c}
         p \\
         q \\
       \end{array}
     \right),\ \forall \left(
       \begin{array}{c}
         p \\
         q \\
       \end{array}
     \right)\in U\times V.
\end{align*}

Sine $I$ is admissible feedback operator of regular linear system $(A,B_1,K,\overline{K}_{A,B_1})$, $I$ is admissible feedback operator for $\bigg(A,B,\left(
                 \begin{array}{c}
                   K \\
                   0 \\
                 \end{array}
               \right),\left(
     \begin{array}{cc}
       \overline{K}_{A,B_1} & \overline{K}_{A,B_2} \\
       0 & 0 \\
     \end{array}
   \right)
\bigg)$. By Lemma \ref{1boundarycontrol}, it follows that
$  \left\{
   \begin{array}{ll}
   \dot{z}(t)&=Lz(t)\\
     G_1z(t)&=Kz(t)+u(t)\\
     G_2z(t)&=v(t)
   \end{array}
 \right.$
is an abstract linear control system with generator
$\bigg(A^I, J^{A,A^I}B\left(
     \begin{array}{cc}
       (I-\overline{K}_{A,B_1})^{-1} & (I-\overline{K}_{A,B_1})^{-1}\overline{K}_{A,B_2} \\
       0 & I \\
     \end{array}
   \right)\bigg)$. Hence $\left\{
   \begin{array}{ll}
   \dot{z}(t)&=Lz(t)\\
     G_1z(t)&=Kz(t)\\
     G_2z(t)&=v(t)
   \end{array}
 \right.$
is an abstract linear control system with generator
$(A^I, J^{A,A^I}B_1(I-\overline{K}_{A,B_1})^{-1}\overline{K}_{A,B_2}+J^{A,A^I}B_2).$
The rest result is obtained directly from Theorem \ref{across}. This completes the proof.
\end{proof}

\begin{example}
We consider Schr$\ddot{o}$dinger equation equation with Dirichlet
boundary control and observation described by
\begin{equation} \label{wave}
\left\{\begin{array}{l}
w_{tt}(x,t)=\Delta w(x,t),\;\; x\in \Omega, t>0, \\
w(x,t)=-\dfrac{\partial(\Delta^{-1}w)}{\partial \nu}, \; \; x\in \Gamma_1,  t\ge 0,\\
w(x,t)=u(x,t), \;\; x\in \Gamma_0,  t\ge 0,
\end{array}\right.
\end{equation}
where $\Omega\subset R^n, n\ge 2$ is an open bounded region with
smooth $C^3$-boundary $\partial
\Omega=\overline{\Gamma_{0}}\cup\overline{\Gamma_{1}}$.
$\Gamma_{0},\Gamma_{1}$ are disjoint parts of the boundary
relatively open in $\partial \Omega$, ${\rm
int}(\Gamma_{1})\neq\emptyset$ and ${\rm
int}(\Gamma_{0})\neq\emptyset$, $\nu$ is the unit normal vector of $\Gamma_0$ pointing towards the exterior of $\Omega$, $u$ is the input function (or control) and $y$ is the output function (or output).

Let $H=H^{-1}(\Omega)$ be the state space and $U=L^2(\partial \Omega)$ be the control (input) or  observation (output) space.
It has been proved in \cite{Guo2005S} that
\begin{align*}
    \left\{\begin{array}{l}
w_{t}(x,t)=-i\Delta w(x,t),\;\; x\in \Omega, t>0, \\
w(x,t)=v(x,t), \; \; x\in \Gamma_1,  t\ge 0,\\
w(x,t)=0, \;\; x\in \Gamma_0,  t\ge 0, \\
 y(x,t)=-i\dfrac{\partial(-\Delta)^{-1}w)}{\partial \nu},\;\; x\in \Gamma_1, t\ge 0
\end{array}\right.
\end{align*}
is regular linear systems with feedthrough operator zero
and $I$ being admissible feedback operator.
Similarly, one can obtain that
\begin{align*}
    \left\{\begin{array}{l}
w_{t}(x,t)=-i\Delta w(x,t),\;\; x\in \Omega, t>0, \\
w(x,t)=0, \; \; x\in \Gamma_1,  t\ge 0,\\
w(x,t)=u(x,t), \;\; x\in \Gamma_0,  t\ge 0, \\
 y(x,t)=-i\dfrac{\partial((-\Delta)^{-1}w)}{\partial \nu},\;\; x\in \Gamma_1, t\ge 0
\end{array}\right.
\end{align*}
is regular linear systems with feedthrough operator zero.
The combinations of \cite{Ammari2002} and \cite{Guo2002L}
implies that system
\begin{align*}
    \left\{\begin{array}{l}
w_{t}(x,t)=-i\Delta w(x,t),\;\; x\in \Omega, t>0, \\
w(x,t)=0, \; \; x\in \Gamma_1,  t\ge 0,\\
w(x,t)=u(x,t), \;\; x\in \Gamma_0,  t\ge 0, \\
 y(x,t)=-i\dfrac{\partial((-\Delta)^{-1}w)}{\partial \nu},\;\; x\in \Gamma_1, t\ge 0
\end{array}\right.
\end{align*}
is exactly controllable at some $t_0>0$.
By Theorem \ref{fedin}, it follows that (\ref{wave})
is an abstract linear control system. Moreover, there exists
a constant $k_0>0$ such that
\begin{align*}
    \left\{\begin{array}{l}
w_{t}(x,t)=-i\Delta w(x,t),\;\; x\in \Omega, t>0, \\
w(x,t)=-ki\dfrac{\partial((-\Delta)^{-1}w)}{\partial \nu}, \; \; x\in \Gamma_1,  t\ge 0,\\
w(x,t)=u(x,t), \;\; x\in \Gamma_0,  t\ge 0,
\end{array}\right.
\end{align*}
is exactly controllable at $t_0>0$ whenever $k<k_0$.
\end{example}

Next, we are concerned with admissible observation and
exactly observation under some regularity perturbation.

\begin{theorem}\label{cross}
Let $(A,B,C,D)$ generate a regular linear system with admissible feedback operator $I$ on $(X,U,U)$, and $(A,B,\Delta C,P)$ generate a regular linear system on $(X,U,Y)$. Then $(A^I,B^I,P(I-D)^{-1}C_\Lambda^A+\Delta C_\Lambda^A)$ generates a regular linear system,
and there holds
$$\Psi_{A^I,P(I-D)^{-1}C_\Lambda^A+\Delta C_\Lambda^A}=F_{A,B,\Delta C,P}(I-F_{A,B,C,D})^{-1}\Psi_{A,C}+\Psi_{A,\Delta C},$$ where $A^I=(A+B(I-D)^{-1}C_\Lambda^A)|_X$ and $B^I=J^{A,A^I}B.$
Moreover, if $(A,\Delta C)$ is exactly observable at $t_0>0$,
then there exists $k_0>0$ such that $(A^I(k),kP(I-kD)^{-1}C_\Lambda^A+\Delta C_\Lambda^A)$ is also exactly observable at $t_0$
whenever $k<k_0,$ where $A^I(k)=(A+B(I-kD)^{-1}kC_\Lambda^A)|_X.$
\end{theorem}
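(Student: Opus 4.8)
\ \ The plan is to run the same augmentation device as in the proof of Theorem \ref{across}, but now enlarging the \emph{output} space rather than the input space. On the spaces $X$, $U\times Y$, $U\times Y$ set
$$\tilde{B}=(B,\,0):U\times Y\to X_{-1},\qquad \tilde{C}=\binom{C}{\Delta C}:X_1\to U\times Y,\qquad \tilde{D}=\left(\begin{smallmatrix}D&0\\ P&0\end{smallmatrix}\right):U\times Y\to U\times Y.$$
Since $(A,B,C,D)$ and $(A,B,\Delta C,P)$ are regular linear systems sharing the control operator $B$, I would check, exactly as in the proof of Theorem \ref{across}, that $(A,\tilde{B},\tilde{C},\tilde{D})$ generates a regular linear system: its semigroup is $T$, its input map is $(\Phi_{A,B},0)$, its output map is $\binom{\Psi_{A,C}}{\Psi_{A,\Delta C}}$, its input-output map is the lower-triangular block operator $\left(\begin{smallmatrix}F_{A,B,C,D}&0\\ F_{A,B,\Delta C,P}&0\end{smallmatrix}\right)$ (the second input channel being inert), its $\Lambda$-extension is $\tilde{C}_\Lambda^A=\binom{C_\Lambda^A}{\Delta C_\Lambda^A}$, and its feedthrough operator is $\tilde{D}$ (the transfer function $\left(\begin{smallmatrix}G_{A,B,C,D}(\lambda)&0\\ G_{A,B,\Delta C,P}(\lambda)&0\end{smallmatrix}\right)$ converges strongly to $\tilde{D}$).

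Next I would verify that $I_{U\times Y}$ is an admissible feedback operator for $(A,\tilde{B},\tilde{C},\tilde{D})$ and that $I-\tilde{D}$ is invertible; by the lower-triangular block shape both reduce to invertibility of $I-F_{A,B,C,D}(t)$ and of $I-D$, which hold because $I$ is an admissible feedback operator for the regular system $(A,B,C,D)$. Theorem \ref{relation}, together with \cite{Weiss1994b} for the preservation of regularity under feedback, then shows that the closed loop $(A,\tilde{B},\tilde{C},\tilde{D})^{I_{U\times Y}}$ is a regular linear system, and the $2\times 2$ block algebra in the formulas of Theorem \ref{relation} gives system operator $A^{I_{U\times Y}}=(A_{-1}+B(I-D)^{-1}C_\Lambda^A)|_X=A^I$, observation operator $\binom{(I-D)^{-1}C_\Lambda^A}{P(I-D)^{-1}C_\Lambda^A+\Delta C_\Lambda^A}$, a control operator that on the $U$-channel is the control operator $B^I$ of the closed loop $\Sigma^I$, feedthrough $\left(\begin{smallmatrix}D(I-D)^{-1}&0\\ P(I-D)^{-1}&0\end{smallmatrix}\right)$, and closed-loop output map $(I-\tilde{F})^{-1}\binom{\Psi_{A,C}}{\Psi_{A,\Delta C}}$. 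Restricting to the sub-system with input channel $U$ and output channel $Y$ (a sub-system of a regular linear system is again regular) yields precisely $(A^I,B^I,P(I-D)^{-1}C_\Lambda^A+\Delta C_\Lambda^A)$; and the second component of $(I-\tilde{F})^{-1}\binom{\Psi_{A,C}}{\Psi_{A,\Delta C}}$, namely $F_{A,B,\Delta C,P}(I-F_{A,B,C,D})^{-1}\Psi_{A,C}+\Psi_{A,\Delta C}$, equals $\Psi_{A^I,P(I-D)^{-1}C_\Lambda^A+\Delta C_\Lambda^A}$, which is the asserted identity.

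For the robustness of exact observability I would rerun the construction with the feedback operator $kI_{U\times Y}$ in place of $I_{U\times Y}$; this is legitimate for $k<1/\|D\|$ by \cite[Propositions 3.12 and 4.10]{Weiss1994b}. Because $\tilde{D}$ and $\tilde{F}$ have zero second column, the closed loop depends on $k$ only through its first channel, and the block algebra gives system operator $A^I(k)=(A_{-1}+B(I-kD)^{-1}kC_\Lambda^A)|_X$, observation operator $kP(I-kD)^{-1}C_\Lambda^A+\Delta C_\Lambda^A$, and
$$\Psi_{A^I(k),\,kP(I-kD)^{-1}C_\Lambda^A+\Delta C_\Lambda^A}(t_0)=k\,F_{A,B,\Delta C,P}(t_0)\big(I-kF_{A,B,C,D}(t_0)\big)^{-1}\Psi_{A,C}(t_0)+\Psi_{A,\Delta C}(t_0).$$
Hence this operator differs from $\Psi_{A,\Delta C}(t_0)$ by at most $k\,\|F_{A,B,\Delta C,P}(t_0)\|\,\|(I-kF_{A,B,C,D}(t_0))^{-1}\|\,\|\Psi_{A,C}(t_0)\|$, which tends to $0$ as $k\to 0$. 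Since $(A,\Delta C)$ is exactly observable at $t_0$ there is $\kappa>0$ with $\|\Psi_{A,\Delta C}(t_0)x\|\geq\kappa\|x\|$ for all $x\in X$, so taking $k_0$ to be the minimum of $1/\|D\|$, $1/(2\|F_{A,B,C,D}(t_0)\|)$ and $\kappa/(4\|F_{A,B,\Delta C,P}(t_0)\|\,\|\Psi_{A,C}(t_0)\|)$ (with $1/0:=+\infty$) forces, for $k<k_0$, both $\|(I-kF_{A,B,C,D}(t_0))^{-1}\|<2$ and the above difference $<\kappa/2$, whence $\|\Psi_{A^I(k),\,kP(I-kD)^{-1}C_\Lambda^A+\Delta C_\Lambda^A}(t_0)x\|\geq(\kappa/2)\|x\|$, that is, $(A^I(k),kP(I-kD)^{-1}C_\Lambda^A+\Delta C_\Lambda^A)$ is exactly observable at $t_0$. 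Note that, in contrast with Theorem \ref{across}, the relevant stability here is that of a lower bound rather than of surjectivity and is immediate from the triangle inequality, so Lemma \ref{Kolmogorov1980} is not needed.

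I expect the only genuine work to be bookkeeping: verifying directly that the block quadruple $(A,\tilde{B},\tilde{C},\tilde{D})$ satisfies the four algebraic identities of Definition \ref{wellposed} and is regular, carrying the $\Lambda$-extension $\binom{C_\Lambda^A}{\Delta C_\Lambda^A}$ correctly through the closed-loop formulas of Theorem \ref{relation}, and confirming the feedthrough invertibility (of $I-\tilde{D}$, hence of $I-D$) needed to apply that theorem. The perturbation estimate itself is then the same two lines as in the proof of Theorem \ref{across}.
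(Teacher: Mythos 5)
Your proposal is correct and follows essentially the same route as the paper: the identical augmented quadruple $\tilde{B}=(B,0)$, $\tilde{C}=\binom{C}{\Delta C}$, $\tilde{D}=\left(\begin{smallmatrix}D&0\\ P&0\end{smallmatrix}\right)$, closed under the feedback $I_{U\times Y}$ via Theorem \ref{relation}, with the lower-triangular block inverse yielding the stated formula for $\Psi_{A^I,P(I-D)^{-1}C_\Lambda^A+\Delta C_\Lambda^A}$. The robustness step is likewise the paper's triangle-inequality estimate (the paper also dispenses with Lemma \ref{Kolmogorov1980} here), differing only in the explicit choice of constants.
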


\begin{proof}\ \
Similar to the proof of \cite[Lemma 4.3]{Mei2015}, 
let $\tilde{B}=(B,0)$, $\tilde{C}=\left(
                                   \begin{array}{c}
                                     C \\
                                     \Delta C \\
                                   \end{array}
                                 \right)$, 
$\tilde{D}=\left(
  \begin{array}{cc}
    D & 0 \\
    P & 0 \\
  \end{array}
\right)
$, we obtain that
$(A^I,B^I,P(I-D)^{-1}C_\Lambda^A+\Delta C_\Lambda^A)$ generates a regular linear system and
\begin{align*}
 &\Psi_{A^I,P(I-D)^{-1}C_\Lambda^A+\Delta C_\Lambda^A}\\
=&(0, I)\Psi_{A^I,\tilde{B}^{I_{X\times U}}}\\
=&(0, I)(I-F_{A,\tilde{B},\tilde{C},\tilde{D}})^{-1}\Psi_{A,\tilde{C}}\\
=& (0, I)
\left(
                                       \begin{array}{cc}
                                         (I-F_{A,B,C,D})^{-1} & 0\\
                                         F_{A,B,\Delta C,P} (I-F_{A,B,C,D})^{-1}& I \\
                                       \end{array}
                                     \right)
\left(
  \begin{array}{c}
    \Psi_{A,C} \\
    \Psi_{A,\Delta C} \\
  \end{array}
\right)
\\
=&F_{A,B,\Delta C,P}(I-F_{A,B,C,D})^{-1}\Psi_{A,C}+\Psi_{A,\Delta C}.
\end{align*}

Below we prove the robustness of exactly observability. As stated in the proof of Theorem \ref{across}, $kI$ is admissible feedback for $(A,B,C,D)$ whenever for $k<\frac{1}{\|D\|}$, which indicates that $I$ is admissible feedback for $(A,B,kC,kD)$.
Since $(A,\Delta C)$ is exactly observable at $t_0$, there exists a constant $k_0>0$ such that $\|\Psi_{A,\Delta C}(t_0)x\|\geq k_0\|x\|,\ x\in X$. It follows from the above proof that
\begin{align*}
    &\|\Psi_{A^I,kP(I-kD)^{-1}C_\Lambda^A+\Delta C_\Lambda^A}(t_0)x\|\\
\geq&\|\Psi_{A,\Delta C}(t_0)x\|-\|\Psi_{A^I,kP(I-kD)^{-1}C_\Lambda^A+\Delta C_\Lambda^A}(t_0)x-\Psi_{A,\Delta C}(t_0)x\|\\
\geq& k_0\|x\|-\|F_{A,B,\Delta C,P}(t_0)(I-F_{A,B,kC,kD}(t_0))^{-1}\Psi_{A,kC}(t_0)x\|\\
=& k_0\|x\|-\|F_{A,B,\Delta C,P}(t_0)(I-kF_{A,B,C,D}(t_0))^{-1}k\Psi_{A,C}(t_0)x\|.
\end{align*}
Let $\alpha_0\in (0,k_0)$ and
$$\theta_0=\min\{\frac{1}{\|D\|},\frac{1}{\|F_{A,B,C,D}(t_0)\|},
\frac{k_0-\alpha_0}{(k_0-\alpha_0)\|F_{A,B,C,D}(t_0)\|
+\|F_{A,B,\Delta C,P}(t_0)\|\Psi_{A,C}(t_0)\|\|}\}.$$
Then
$$\|\Psi_{A^I,kP(I-kD)^{-1}C_\Lambda^A+\Delta C_\Lambda^A}(t_0)x\|
> \alpha_0\|x\|,$$
whenever $k<\theta_0$. The proof is therefore completed.
\end{proof}

\begin{remark}
In the special case that $Y=X$ and $B=I$, the above theorem says that both $P$ and $C$ being admissible for $A$ implies that
$C$ is admissible for $A+C$, such result has been proved by Hadd \cite{Hadd2006}.
If $Y=X$ and $C=I$, theorem tells that $(A,B,P)$ generating a regular linear system implies that $((A_{-1}+B)|_X,J^{A,(A_{-1}+B)|_X} B,P^A_\Lambda)$ generates a regular linear system, particularly, $J^{A,(A_{-1}+B)|_X} B$
is admissible for $A+P$.
This means that our result is a generalization of \cite{Hadd2006}.
\end{remark}
\begin{theorem}\label{fedin1}
Assume that the boundary system $(L,G,Q)$ is a regular linear system
generated by $(A,B,G,\overline{G}_{A,B})$ on $(X,U,U)$ with
admissible feedback operator $I$. Suppose that boundary system $(L,G,K)$ is a regular linear
system on $(X,U,Y)$. Then the system
\begin{align}\label{observef}
  \left\{
   \begin{array}{ll}
   \dot{z}(t)&=Lz(t)\\
     Gz(t)&=Qz(t)\\
     y(t)&=Kz(t)
   \end{array}
 \right.
\end{align}
is an abstract linear observation system generated by
$(A^I,K).$
If, in addition, system $
  \left\{
   \begin{array}{ll}
   \dot{z}(t)&=Lz(t)\\
     Gz(t)&=0\\
     y(t)&=Kz(t)
   \end{array}
 \right.$
is exactly observable at some $t_0>0$, there exists a constant
$\theta_0>0$ such that system
$\left\{
   \begin{array}{ll}
   \dot{z}(t)&=Lz(t)\\
     Gz(t)&=kQz(t)\\
     y(t)&=Kz(t)
   \end{array}
 \right.$
is exactly observable at $t_0>0$ whenever $k<\theta_0.$
\end{theorem}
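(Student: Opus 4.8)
The plan is to mirror the proof of Theorem~\ref{fedin}, replacing the closed-loop control identity of Lemma~\ref{1boundarycontrol} by the output identity of Lemma~\ref{1output}, and invoking Theorem~\ref{cross} in place of Theorem~\ref{across}. First I would identify the generators. Since $(L,G,Q)$ and $(L,G,K)$ are regular linear systems, the boundary control part $\dot z(t)=Lz(t)$, $Gz(t)=u(t)$ is an abstract linear control system generated by $(A,B)$, where $A=L|_{\mathrm{Ker}\,G}$ and $B=(\lambda-A_{-1})D_{\lambda,L,G}$. Lemma~\ref{1output} then shows that $(A,B,Q)$ and $(A,B,K)$ generate regular linear systems with feedthrough operators $\overline{Q}_{A,B}$ and $\overline{K}_{A,B}$, and that
\begin{equation*}
Qz=Q_\Lambda^A z+\overline{Q}_{A,B}\,Gz,\qquad Kz=K_\Lambda^A z+\overline{K}_{A,B}\,Gz,\qquad z\in D(L).
\end{equation*}

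Next I would apply Theorem~\ref{cross} with the data $C:=Q$, $D:=\overline{Q}_{A,B}$, $\Delta C:=K$, $P:=\overline{K}_{A,B}$. Since $I$ is an admissible feedback operator for the regular linear system $(A,B,Q,\overline{Q}_{A,B})$ on $(X,U,U)$ and $(A,B,K,\overline{K}_{A,B})$ is a regular linear system on $(X,U,Y)$, Theorem~\ref{cross} gives that $\big(A^I,\,J^{A,A^I}B,\,\overline{K}_{A,B}(I-\overline{Q}_{A,B})^{-1}Q_\Lambda^A+K_\Lambda^A\big)$ generates a regular linear system, where $A^I=(A_{-1}+B(I-\overline{Q}_{A,B})^{-1}Q_\Lambda^A)|_X$; in particular $\big(A^I,\,\overline{K}_{A,B}(I-\overline{Q}_{A,B})^{-1}Q_\Lambda^A+K_\Lambda^A\big)$ is an abstract linear observation system. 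To identify this with (\ref{observef}), note that for the state equation, imposing $Gz(t)=Qz(t)$ is exactly the feedback system of $(A,B,Q,\overline{Q}_{A,B})$ with feedback operator $I$, so by Theorem~\ref{relation} (equivalently Lemma~\ref{1boundarycontrol}) the closed-loop $C_0$-semigroup on $X$ is generated by $A^I$ and trajectories satisfy $z(t)=T^I(t)z(0)$. Substituting $Gz=Qz$ into the identities above gives $(I-\overline{Q}_{A,B})Gz=Q_\Lambda^A z$, hence $Gz=(I-\overline{Q}_{A,B})^{-1}Q_\Lambda^A z$ and
\begin{equation*}
y(t)=Kz(t)=\big[K_\Lambda^A+\overline{K}_{A,B}(I-\overline{Q}_{A,B})^{-1}Q_\Lambda^A\big]z(t)
\end{equation*}
on a dense set of initial data; comparing with the observation system produced by Theorem~\ref{cross}, the system (\ref{observef}) is an abstract linear observation system whose observation operator is $K|_{D(A^I)}$, i.e., it is generated by $(A^I,K)$.

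For the robustness statement, observe first that the system $\dot z=Lz$, $Gz=0$, $y=Kz$ has state evolving in $D(A)$ with $z(t)=T(t)z(0)$, so its output map is $\Psi_{A,K}$ and its exact observability at $t_0$ is precisely the hypothesis ``$(A,\Delta C)$ exactly observable at $t_0$'' required in Theorem~\ref{cross}. For $k$ small enough, $kI$ is an admissible feedback operator for $(A,B,Q,\overline{Q}_{A,B})$ by \cite{Weiss1994b}, equivalently $I$ is an admissible feedback operator for $(A,B,kQ,k\overline{Q}_{A,B})$, and repeating the identification of the previous paragraph with $Q$ replaced by $kQ$ shows that the output map of $\dot z=Lz$, $Gz=kQz$, $y=Kz$ coincides with $\Psi_{A^I(k),\,k\overline{K}_{A,B}(I-k\overline{Q}_{A,B})^{-1}Q_\Lambda^A+K_\Lambda^A}$, where $A^I(k)=(A_{-1}+B(I-k\overline{Q}_{A,B})^{-1}kQ_\Lambda^A)|_X$. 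The robustness part of Theorem~\ref{cross} then yields $\theta_0>0$ such that this observation system, and hence the boundary system $\dot z=Lz$, $Gz=kQz$, $y=Kz$, is exactly observable at $t_0$ whenever $k<\theta_0$.

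The step I expect to be the main obstacle is the identification carried out in the second paragraph: one has to verify carefully that the boundary feedback $Gz=Qz$ indeed produces the closed-loop generator $A^I$ of Theorem~\ref{relation} and that along closed-loop trajectories the output $Kz$ is given by the claimed $\Lambda$-extension formula on a dense set. This relies on the invertibility of $I-\overline{Q}_{A,B}$ --- which should be recorded as a consequence of $I$ being an admissible feedback operator together with regularity, via \cite{Weiss1994b} --- and on a careful tracking of domains, entirely analogous to the bookkeeping in the proofs of Lemmas~\ref{1output} and \ref{1boundarycontrol}.
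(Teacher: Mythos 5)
Your proposal is correct and follows essentially the same route as the paper's proof: both use Lemma~\ref{1output} to write $Kz=K_\Lambda^Az+\overline{K}_{A,B}Gz$ and $Qz=Q_\Lambda^Az+\overline{Q}_{A,B}Gz$, substitute the feedback relation $Gz=Qz$ on $D(A^I)$ to identify $K$ there with $K_\Lambda^A+\overline{K}_{A,B}(I-\overline{Q}_{A,B})^{-1}Q_\Lambda^A$, and then invoke Theorem~\ref{cross} (with $C=Q$, $D=\overline{Q}_{A,B}$, $\Delta C=K$, $P=\overline{K}_{A,B}$) for both the admissible-observation conclusion and the robustness of exact observability. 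Your write-up is in fact somewhat more explicit than the paper's about the closed-loop identification and the $k$-dependent output map, but the underlying argument is identical.
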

\begin{proof}\ \
Since boundary system $(L,G,K)$ is a regular linear system
with admissible feedback operator $I$, it follows from Lemma \ref{1output} that
\begin{align}\label{K}
    Kz=K_\Lambda^Az+\overline{K}_{A,B}Gz,\ z\in Z,
\end{align}
and
\begin{align}\label{DL}
    Gz=Qz, \ z\in D(A^I)\subset D(L).
\end{align}
The assumption $(L,G,Q)$ is a regular linear system implies
\begin{align}\label{G}
    Qz=Q_\Lambda^Az+\overline{Q}_{A,B}Gz,\ z\in D(L).
\end{align}
Observe that $I-\overline{Q}_{A,B}$ is invertible. The combination of (\ref{DL}) and (\ref{G}) implies that
$Qz=(I-\overline{Q}_{A,B})^{-1}Q_\Lambda^Az,\ z\in D(A^I),$
substituted which into \ref{K} to get
\begin{align*}
    Kz=K_\Lambda^Az+\overline{K}_{A,B}(I-\overline{Q}_{A,B})^{-1}Q_\Lambda^Az,\ z\in D(A^I).
\end{align*}
By Theorem \ref{cross}, (\ref{observef}) is an abstract linear observation system generated by $(A^I,K)$. Furthermore, the rest result is obtained directly from Theorem \ref{cross}. This completes the proof.
\end{proof}

\begin{example}\label{ex}
Consider the following one-dimensional Euler-Bernoulli beam equation
\begin{align}\label{1eb}
    \left\{
      \begin{array}{ll}
        w_{tt}(x,t)+w_{xxxx}(t,x)=0, & \hbox{ }x\in(0,1) \\
        w(0,t)=w_x(0,t)=w_{xx}(1,t)=0,\ w_{xxx}(1,t)=w_t(1,t), & \hbox{ } \\
        y(t)=w_x(1,t). & \hbox{ }
      \end{array}
    \right.
\end{align}

It follows from \cite{Guo2002L} that
 \begin{align}\label{gl}
    \left\{
      \begin{array}{ll}
        w_{tt}(x,t)+w_{xxxx}(t,x)=0, & \hbox{ }x\in(0,1) \\
        w(0,t)=w_x(0,t)=w_{xx}(1,t)=0,\ w_{xxx}(1,t)=u(t), & \hbox{ } \\
        y(t)=w_t(1,t). & \hbox{ }
      \end{array}
    \right.
\end{align}
is a regular linear system with admissible feedback operator $I$ and the corresponding feedback operator is zero.
By Theorem \ref{fedin}, to obtain that (\ref{1eb}) is an abstract linear observation system, we only have to prove that
\begin{align}\label{1ebw}
    \left\{
      \begin{array}{ll}
        w_{tt}(x,t)+w_{xxxx}(t,x)=0, & \hbox{ }x\in(0,1) \\
        w(0,t)=w_x(0,t)=w_{xx}(1,t)=0,\ w_{xxx}(1,t)=u(t), & \hbox{ } \\
        y(t)=w_x(1,t), & \hbox{ }.
      \end{array}
    \right.
\end{align}
is a regular linear systems.
We divide the rest proof into three steps.

\textbf{Step 1.}
Boundary observation system
\begin{align*}
    \left\{
      \begin{array}{ll}
        w_{tt}(x,t)+w_{xxxx}(t,x)=0, & \hbox{ }x\in(0,1) \\
        w(0,t)=w_x(0,t)=w_{xx}(1,t)=0,\ w_{xxx}(1,t)=0, & \hbox{ } \\
        y(t)=w_x(1,t), & \hbox{ }.
      \end{array}
    \right.
\end{align*}
is an abstract linear observation system.
To this end, we let $$F(t)=\frac{1}{2}\int_0^1[w_t^2(x,t)+w_{xx}^2(x,t)]dx.$$
It is not hard to see that $\dot{F}(t)=0$ thereby $F(t)=F(0),\ t\geq 0$. Set $$\rho(t)=\int_0^1x(x-1)w_t(x,t)w_x(x,t)dx.$$
We obtain $|\rho(t)|\leq F(t)=F(0).$ Take the derivative with respect to the time on both sides to get
\begin{align*}
   \dot{\rho}(t)=&\int_0^1x(x-1)w_{tt}(x,t)w_x(x,t)dx+
\int_0^1x(x-1)w_{t}(x,t)w_{xt}(x,t)dx\\
 =&-\int_0^1x(x-1)w_{xxxx}(x,t)w_x(x,t)dx+
\frac{1}{2}\int_0^1x(x-1)\frac{\partial}{\partial x}w^2_{t}(x,t)dx\\
=&-x(x-1)w_{xxx}(x,t)w_x(x,t)|^1_{x=0}+\int_0^1x(x-1)w_{xxx}(x,t)w_{xx}(x,t)dx\\
&+\int_0^1(2x-1)w_{xxx}(x,t)w_{x}(x,t)dx+\frac{1}{2}\int_0^1x(x-1)\frac{\partial}{\partial x}w^2_{t}(x,t)dx\\
=&\frac{1}{2}\int_0^1x(x-1)\frac{\partial}{\partial x}w^2_{xx}(x,t)dx+\frac{1}{2}\int_0^1x(x-1)\frac{\partial}{\partial x}w^2_{t}(x,t)dx\\
&+(2x-1)w_{xx}(x,t)w_{x}(x,t)|_{x=0}^1
-\int_0^1w_{xx}(x,t)[2w_{x}(x,t)+(2x-1)w_{xx}(x,t)]dx\\
=&\frac{1}{2}x(x-1)[w^2_{t}(x,t)+w^2_{xx}(x,t)]|^1_{x=0}
-\frac{1}{2}\int_0^1(2x-1)[w^2_{t}(x,t)+3w^2_{xx}(x,t)]dx\\
&-\int_0^12w_{xx}(x,t)w_{x}(x,t)dx\\
=&-\frac{1}{2}\int_0^1(2x-1)[w^2_{t}(x,t)+3w^2_{xx}(x,t)]dx
-w^2_{x}(1,t).
\end{align*}
Integrate from $0$ to $T$ with respect to $t$ to derive
\begin{align*}
  \int_0^Tw^2_{x}(1,t)dt&=\int_0^T\bigg[
-\frac{1}{2}\int_0^1(2x-1)[w^2_{t}(x,t)+3w^2_{xx}(x,t)]dx\bigg]dt
+ \rho(0)-\rho(T) \\
&\leq (3T+2)F(0).
\end{align*}

\textbf{Step 2.}
Boundary system (\ref{1ebw}) is a well-posed. We consider the boundary system under the zero initial condition:
$w(x,0)=w_t(x,0)=0$.
Define $F(t)$ and $\rho$ as the same in Step 1. By \cite{Guo2008W}, it follows that
$$F(t)\leq C_{\delta, T}\int_0^Tu^2(t)dt,\ \forall t\in [0,T],$$
where $\delta\in(0,\frac{1}{1+4T})$ and
$C_{\delta, T}=\frac{1+\delta+4T}{2[1-(1+4T)\delta]}+\frac{1}{2\delta}.$

Observe that $|\rho(t)|\leq F(t)$ holds. Take the derivative with respect to the time on both sides to get
\begin{align*}
   \dot{\rho}(t)=-\frac{1}{2}\int_0^1(2x-1)[w^2_{t}(x,t)+3w^2_{xx}(x,t)]dx
-w^2_{x}(1,t).
\end{align*}
Integrate from $0$ to $T$ with respect to $t$ to derive
\begin{align*}
  \int_0^Tw^2_{x}(1,t)dt&=\int_0^T\bigg[
-\frac{1}{2}\int_0^1(2x-1)[w^2_{t}(x,t)+3w^2_{xx}(x,t)]dx\bigg]dt
-\rho(T) \\
&\leq 3\int_0^TF(t)dt+F(T)\\
&\leq(1+3T)C_{\delta,T}\int_0^Tu^2(t)dt.
\end{align*}

\textbf{Step 3.}
Boundary system (\ref{1ebw}) is regular.
Denote by $\hat{w}(x,s)$ the Laplace transform of $w(x,s)$ with respect to $t$, that is, $\hat{w}(x,s)=\int_0^\infty w(x,t)e^{-st}dt.$ Similarly, the Laplace transform $\hat{u}(s)$ of $u(t)$ with respect to $t$ is $\hat{u}(s)=\int_0^\infty u(t)e^{-st}dt.$
For the zero initial condition $w(x,0)=w_t(x,0)=0$, we get
\begin{align*}
    \left\{
      \begin{array}{ll}
        s^2\hat{w}(x,s)+\hat{w}_{xxxx}(x,s)=0, & \hbox{ }x\in(0,1) \\
        \hat{w}(0,s)=\hat{w}_x(0,s)=\hat{w}_{xx}(1,s)=0,\ \hat{w}_{xxx}(1,s)=\hat{u}(s), & \hbox{ } \\
        \hat{y}(s)=\hat{w}_x(1,t), & \hbox{ }.
      \end{array}
    \right.
\end{align*}
Denote by $H(s)$ the corresponding transform function.
Since the system is well-posed. Then we have that $H(s)$ satisfies that $\hat{y}(s)=H(s)\hat{u}(s)$ and it is bounded on some right half plane. In order derive the regularity, we only need to show that the limit of transfer function exists as $s\rightarrow +\infty.$ So we can set $s>0$ and $t=\sqrt{\frac{s}{2}}$.
The first equation implies that
$$\hat{w}(x,s)=ach(tx)cos(tx)+bch(tx)sin(tx)+csh(tx)cos(tx)
+dsh(tx)sin(tx)$$
with $a,b,c$ and $d$ being to be determined.
Use $\hat{w}(0,s)=0$ to get $a=0$.
We obtain
\begin{align*}
    \hat{w}_x(x,s)=&t[bsh(tx)sin(tx)+(b+c)ch(tx)cos(tx)
-csh(tx)sin(tx)\\
&+dch(tx)sin(tx)+dsh(tx)cos(tx)].
\end{align*}
Use $\hat{w}_x(0,s)=0$ to get $b+c=0$.
We obtain
\begin{align*}
    \hat{w}_{xx}(x,s)=&2t^2[bch(tx)sin(tx)
+bsh(tx)cos(tx)+dch(tx)cos(tx)]
\end{align*}
and
\begin{align*}
    \hat{w}_{xxx}(x,s)=&2t^3[2bch(tx)cos(tx)
+dsh(tx)cos(tx)-dch(tx)sin(tx)].
\end{align*}
Use $\hat{w}_{xx}(1,s)=0$ and $\hat{w}_{xxx}(1,s)=\hat{u}(s)$ to get
\begin{align*}
    \left\{
       \begin{array}{ll}
         b=\frac{chtcost}{2t^3(ch^2t+cos^2t)}\hat{u}(s), & \hbox{} \\
         d=-\frac{chtsint+shtcost}{2t^3(ch^2t+cos^2t)}\hat{u}(s). & \hbox{}
       \end{array}
     \right.
\end{align*}
Hence we obtain that
$$H(s)=-\frac{shtchtsintcost-[chtsint+shtcost]^2}{2t^2(ch^2t+cos^2t)}.$$
Observe that
\begin{align*}
    |H(s)|\leq \frac{ch^2t+(2cht)^2}{2t^2ch^2t}=\frac{5}{2t^2}=\frac{5}{s}.
\end{align*}
Hence $H(s)\rightarrow 0$ as $s\rightarrow +\infty$. The regularity of (\ref{1ebw}) is therefore proved. This completes the proof.
\end{example}

\begin{example}
Consider one-dimensional Euler-Bernoulli beam equation
\begin{align}\label{1eb1}
    \left\{
      \begin{array}{ll}
        w_{tt}(x,t)+w_{xxxx}(t,x)=0, & \hbox{ }x\in(0,1) \\
        w(0,t)=w_x(0,t)=w_{xx}(1,t)=0,\ w_{xxx}(1,t)=w_t(1,t), & \hbox{ } \\
        y(t)=w_{xx}(0,t). & \hbox{ }
      \end{array}
    \right.
\end{align}
Guo, Wang and Yang showed in \cite{Guo2008W} that
 \begin{align}\label{lebc}
    \left\{
      \begin{array}{ll}
        w_{tt}(x,t)+w_{xxxx}(t,x)=0, & \hbox{ }x\in(0,1) \\
        w(0,t)=w_x(0,t)=w_{xx}(1,t)=0,\ w_{xxx}(1,t)=w(1,t), & \hbox{ } \\
        y(t)=w_{xx}(0,t). & \hbox{ }
      \end{array}
    \right.
\end{align}
is a well-posed linear system.
From Step 3 of the above Example \ref{ex}, we obtain that
the transform function of system (\ref{lebc}) is presented
on $R^+$ by
\begin{align*}
   H_1(s)=\frac{\hat{w}_{xx}(0,s)}{\hat{u}(s)}=\frac{2t^2d}{\hat{u}(s)}
=-\frac{chtsint+shtcost}{t(ch^2t+cos^2t)}
\end{align*}
with $t=\sqrt{\frac{s}{2}}$.
Then
\begin{align*}
   |H_1(s)| \leq \frac{cht+sht}{tch^2t}\leq \frac{2}{tcht}\rightarrow 0,
\end{align*}
as $s\rightarrow +\infty,$ that is,
system (\ref{lebc}) is regular. Observe that system (\ref{gl})
is regular. By Theorem \ref{fedin1}, system (\ref{1eb1}) is an abstract linear observation system.

Next, we show that system
\begin{align}\label{1eb11}
    \left\{
      \begin{array}{ll}
        w_{tt}(x,t)+w_{xxxx}(t,x)=0, & \hbox{ }x\in(0,1) \\
        w(0,t)=w_x(0,t)=w_{xx}(1,t)=0,\ w_{xxx}(1,t)=0, & \hbox{ } \\
        y(t)=w_{xx}(0,t), & \hbox{ }
      \end{array}
    \right.
\end{align}
is exactly observable.
Let $$F(t)=\frac{1}{2}\int_0^1[w_t^2(x,t)+w_{xx}^2(x,t)]dx.$$
We have $\dot{F}(t)=0$ thereby $F(t)=F(0),\ t\geq 0$. Set $$\rho_1(t)=\int_0^1(x-1)w_t(x,t)w_x(x,t)dx.$$
Obviously, $|\rho_1(t)|\leq F(t)=F(0).$
We compute
$$\dot{\rho}_1(t)=\frac{1}{2}w_{xx}^2(0,t)-
\frac{1}{2}\int_0^1(w^2_t(x,t)+3w^2_{xx}(x,t))dx.$$
Integrate from $0$ to $T$ with respect to $t$ to get
\begin{align*}
\int_0^Tw_{xx}^2(0,t)dt=&
\frac{1}{2}\int_0^t\int_0^1(w^2_t(x,t)+3w^2_{xx}(x,t))dxdt+\rho_1(0)
-\rho_1(T)\\
\geq & (T-2)E(0).
\end{align*}
This indicates that system (\ref{1eb11}) is exactly observable at any $T>2$.
Then, by Theorem \ref{fedin1}, system
\begin{align*}
    \left\{
      \begin{array}{ll}
        w_{tt}(x,t)+w_{xxxx}(t,x)=0, & \hbox{ }x\in(0,1) \\
        w(0,t)=w_x(0,t)=w_{xx}(1,t)=0,\ w_{xxx}(1,t)=w_t(1,t), & \hbox{ } \\
        y(t)=w_{xx}(0,t). & \hbox{ }
      \end{array}
    \right.
\end{align*}
is exactly observable at $T>2$ whenever $k$ is small enough.
\end{example}

In the rest of this section, we
are concern with the regularity under perturbations.

\begin{theorem}\label{bcross}
Assume that $(A,B,C,D)$ generates a regular linear system with admissible feedback operator $I$. Suppose $(A,\Delta B,C)$, $(A,B,\Delta C)$ and $(A,\Delta B,\Delta C)$ generates regular linear systems. Then $(A^I,J^{A,A^I}\Delta B,\Delta C_\Lambda^A)$ generates a regular linear system.
\end{theorem}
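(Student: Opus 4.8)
The plan is to reproduce, in a two‑input/two‑output setting, the device used in the proofs of Theorems~\ref{across} and~\ref{cross}: pack the four given systems into a single regular linear system on product spaces and then close a loop on its upper‑left corner via Theorem~\ref{relation}. Fix the spaces so that $(A,B,C,D)$ lives on $(X,Y,Y)$, $(A,\Delta B,C)$ on $(X,U,Y)$, $(A,B,\Delta C)$ on $(X,Y,Z)$ and $(A,\Delta B,\Delta C)$ on $(X,U,Z)$, and set
\[
\hat B=(B,\ \Delta B)\in L(Y\times U,\,X_{-1}),\quad
\hat C=\begin{pmatrix}C\\\Delta C\end{pmatrix}\in L(X_1,\,Y\times Z),\quad
\hat D=\begin{pmatrix}D&0\\0&0\end{pmatrix},
\]
the off‑diagonal and $(2,2)$ entries of $\hat D$ vanishing because the perturbed data are given as triples; were they nonzero, the familiar correction terms of Theorems~\ref{across}--\ref{cross} would simply reappear. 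Since the semigroup $T$ generated by $A$ is common to all four systems and their input, output and input–output maps combine componentwise into bounded families obeying the intertwining identities of Definition~\ref{wellposed} with transfer function converging strongly to $\hat D$, the quadruple $(A,\hat B,\hat C,\hat D)$ generates a regular linear system whose input–output map is the $2\times2$ block matrix assembled from $F_{A,B,C,D}$, $F_{A,\Delta B,C}$, $F_{A,B,\Delta C}$, $F_{A,\Delta B,\Delta C}$; the hypothesis that $(A,\Delta B,\Delta C)$ is regular is exactly what legitimizes the $(2,2)$ block.

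Next I would check that $\hat\Gamma=\left(\begin{smallmatrix}I&0\\0&0\end{smallmatrix}\right)\in L(Y\times Z,Y\times U)$ is an admissible feedback operator for $(A,\hat B,\hat C,\hat D)$ and that $I-\hat D\hat\Gamma$ is invertible. As $\hat\Gamma$ discards the second output channel and feeds only the $C$‑output into the $B$‑input, $I-\hat F(t)\hat\Gamma$ is block lower triangular with diagonal blocks $I-F_{A,B,C,D}(t)$ and $I$, hence is boundedly invertible precisely because $I$ is an admissible feedback operator for $(A,B,C,D)$; and $I-\hat D\hat\Gamma$ reduces to $I-D$ on the first factor and to $I$ on the second, so it is invertible since $I-D$ is — the invertibility of $I-D$ being the standing hypothesis inherited from Theorems~\ref{across}--\ref{cross} that also gives meaning to $A^I$. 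Theorem~\ref{relation} then produces the closed‑loop system $(A,\hat B,\hat C,\hat D)^{\hat\Gamma}$, which is well posed and, just as in the proof of Theorem~\ref{across}, in fact regular, with generator $(\hat A^{\hat\Gamma},\hat B^{\hat\Gamma},\hat C^{\hat\Gamma})$.

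The final step is the blockwise evaluation of this generator. Because $\hat\Gamma$ is supported in the $(1,1)$ corner, $\hat\Gamma(I-\hat D\hat\Gamma)^{-1}=\left(\begin{smallmatrix}(I-D)^{-1}&0\\0&0\end{smallmatrix}\right)$, whence $\hat B\hat\Gamma(I-\hat D\hat\Gamma)^{-1}\hat C_\Lambda^A=B(I-D)^{-1}C_\Lambda^A$ and therefore $\hat A^{\hat\Gamma}=(A_{-1}+B(I-D)^{-1}C_\Lambda^A)|_X=A^I$; next, $\hat B^{\hat\Gamma}$, obtained from $\hat B$ exactly as in the proof of Theorem~\ref{across}, has second column $J^{A,A^I}\Delta B$; finally $\hat C^{\hat\Gamma}=(I-\hat D\hat\Gamma)^{-1}\hat C_\Lambda^A$ has second row $\Delta C_\Lambda^A$, since $(I-\hat D\hat\Gamma)^{-1}$ is $(I-D)^{-1}$ on the first factor and $I$ on the second, while $\hat C_\Lambda^A=\left(\begin{smallmatrix}C_\Lambda^A\\\Delta C_\Lambda^A\end{smallmatrix}\right)$ on $D(C_\Lambda^A)\cap D(\Delta C_\Lambda^A)$. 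Restricting the input of the closed‑loop system to the second factor $U$ and projecting its output onto the second factor $Z$ — an operation that plainly preserves well‑posedness and regularity, as is visible directly from the four algebraic equations of Definition~\ref{wellposed} and the strong convergence of the transfer function — then yields precisely the regular linear system generated by $(A^I,\,J^{A,A^I}\Delta B,\,\Delta C_\Lambda^A)$, which is the assertion.

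The block algebra above is routine; the point I expect to be the main obstacle is the domain bookkeeping concealed in the identities $\hat A^{\hat\Gamma}=A^I$ and ``the second row of $\hat C^{\hat\Gamma}$ is $\Delta C_\Lambda^A$'': one must know that $D(A^I)\subseteq D(\Delta C_\Lambda^A)$ and that the domain of the closed‑loop generator is exactly $D(A^I)$, not the a priori smaller $D(C_\Lambda^A)\cap D(\Delta C_\Lambda^A)$. This is the same compatibility that is handled in the proofs of Theorems~\ref{across} and~\ref{cross}, and I would settle it as in \cite{Mei2015}: the admissibility of $\Delta C$ for $A$, combined with the regularity of the perturbed dynamics, forces $\Delta C_\Lambda^A$ to be well defined and bounded on $D(A^I)$, so that the block identities hold with the correct domains.
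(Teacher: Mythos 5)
Your argument is correct, but it is not the route the paper takes for this theorem. The paper's proof of Theorem \ref{bcross} first invokes Theorem \ref{across} to get that $(A^I,J^{A,A^I}\Delta B)$ is an abstract linear control system, then Theorem \ref{cross} to get that $(A^I,\Delta C_\Lambda^A)$ is an abstract linear observation system, and finally writes down the candidate input--output map $F=F_{A,B,\Delta C}(I-F_{A,B,C,D})^{-1}F_{A,\Delta B,C}+F_{A,\Delta B,\Delta C}$ by hand and asserts that the resulting quadruple satisfies the algebraic identities of Definition \ref{wellposed} and is regular. You instead pack all four given systems into one $2\times 2$ block regular system, close the loop once with $\hat\Gamma=\bigl(\begin{smallmatrix}I&0\\0&0\end{smallmatrix}\bigr)$ via Theorem \ref{relation}, and read off the $(2,2)$ corner. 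The two proofs meet in the middle: the $(2,2)$ block of your closed-loop input--output map $\hat F(I-\hat\Gamma\hat F)^{-1}$ is exactly the operator $F$ that the paper defines, so your construction in effect derives the formula the paper postulates, and it delegates the verification that $F$ obeys condition (iv) of Definition \ref{wellposed} (which the paper dismisses as ``not hard to verify'') to the general feedback theorem. What your route costs is two pieces of outside input that the paper's route avoids at this stage: that regularity (not merely well-posedness, which is all Theorem \ref{relation} as stated guarantees) is preserved under admissible feedback, and that restricting to one input channel and projecting onto one output channel preserves regularity; both are standard (the first is in Weiss \cite{Weiss1994b} and is quoted in the introduction, the second is immediate from the definitions), and you flag the genuinely delicate point --- that $D(A^I)\subseteq D(\Delta C_\Lambda^A)$ so the block identities hold on the right domains --- which the paper also leaves implicit. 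Your tacit assumption that $I-D$ is invertible is likewise needed, and is silently inherited in the paper's proof through its appeal to Theorem \ref{across}.
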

\begin{proof}\ \
By Theorem \ref{across}, it follows that $(A^I,J^{A,A^I}\Delta B)$ generates an abstract linear control system with
$$\Phi_{A^I,J^{A,A^I}\Delta B}=\Phi_{A,B}(I-F_{A,B,C,D})^{-1}F_{A,\Delta B,C}+\Phi_{A,\Delta B}.$$
Theorem \ref{cross} implies that $(A^I,\Delta C^A_\Lambda)$ generates an abstract linear observation system with
$$\Psi_{A^I,\Delta C_\Lambda^A}=F_{A,B,\Delta C}(I-F_{A,B,C,D})^{-1}\Psi_{A,C}+\Psi_{A,\Delta C}.$$
Since $(A,B,\Delta C)$ and $(A,\Delta B,\Delta C)$ are
regular linear system, we define
$F=F_{A,B,\Delta C}(I-F_{A,B,C,D})^{-1}F_{A,\Delta B,C}+F_{A,\Delta B,\Delta C}$.
Then it is not hard to verify that $(T_{A^I},\Phi_{A^I,J^{A,A^I}\Delta B},\Psi_{A^I,\Delta C_\Lambda^A},F)$ is a regular linear system
generated by $(A^I,J^{A,A^I}\Delta B,\Delta C_\Lambda^A)$.
The proof is therefore completed.
\end{proof}

\begin{remark}
In the special case that $Y=X$ and $C=I$, the above theorem says that both $(A,B,\Delta C)$ and $(A,\Delta B,\Delta C)$ being regular linear system implies that
$((A_{-1}+B)|_X,J^{A,(A_{-1}+B)|_X}\Delta B,\Delta C^A_\Lambda)$ generates a regular linear system, such result has been proved by Hadd \cite{Hadd2006};
If $Y=X$ and $B=I$, the above theorem says that both $(A,\Delta B,C)$ and $(A,\Delta B,\Delta C)$ being regular linear system implies that
$(A+C,J^{A,+C}\Delta B,\Delta C)$ generates a regular linear system.
This means that our result is a generalization of \cite{Hadd2006}.
\end{remark}
\begin{theorem}\label{fedin2}
Assume that the boundary system
$  \left\{
   \begin{array}{ll}
   \dot{z}(t)&=Lz(t)\\
     G_1z(t)&=u(t)\\
     G_2z(t)&=0\\
     y(t)&=Kz(t)
   \end{array}
 \right.$
is regular linear system generated by $(A,B_1,K,\overline{K}_{A,B_1})$ with $I$ being admissible feedback operator. Suppose that
$
  \left\{
   \begin{array}{ll}
   \dot{z}(t)&=Lz(t)\\
     G_1z(t)&=u(t)\\
     G_2z(t)&=0\\
     y(t)&=W z(t)
   \end{array}
 \right.$ and
 $  \left\{
   \begin{array}{ll}
   \dot{z}(t)&=Lz(t)\\
     G_1z(t)&=0\\
     G_2z(t)&=v(t)\\
     y(t)&=\left(
             \begin{array}{c}
               K \\
               W \\
             \end{array}
           \right)     z(t)
   \end{array}
 \right.$
 are regular linear systems with $B_2$ being the control operator of the second system.
Then
\begin{align}\label{mix}
  \left\{
   \begin{array}{ll}
   \dot{z}(t)&=Lz(t)\\
     G_1z(t)&=Kz(t)\\
     G_2z(t)&=v(t)\\
     y(t)&=Wz(t)
   \end{array}
 \right.
\end{align}
 is a regular linear system generated by
$(A^I,J^{A,A^I}B_1(I-\overline{K}_{A,B_1})^{-1}\overline{K}_{A,B_2}+J^{A,A^I}B_2,
W,\overline{W}_{A,B_1}(I-\overline{K}_{A,B_1})^{-1}\overline{K}_{A,B_2}
+\overline{W}_{A,B_2}).$
\end{theorem}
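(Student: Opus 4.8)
The plan is to reduce Theorem~\ref{fedin2} to the abstract feedback theory of Theorem~\ref{relation}, following the pattern of the proofs of Theorem~\ref{fedin} and Theorem~\ref{fedin1}. Concretely, I would assemble the three listed boundary systems into a single open-loop boundary system with the two boundary inputs $G_1z=u$ and $G_2z=v$ and the combined output $y=\binom{K}{W}z$, then close the $G_1$-loop by feeding $Kz$ back into the $G_1$-channel, and finally identify the generator.

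First, as in the opening of the proof of Theorem~\ref{fedin}, well-posedness of the listed systems forces $G_1$ restricted to $D(L)\cap\mathrm{Ker}\,G_2$ and $G_2$ restricted to $D(L)\cap\mathrm{Ker}\,G_1$ to be surjective, so $\binom{G_1}{G_2}$ is surjective on $D(L)$, and, via the splitting $D(L)=D(A)\oplus\mathrm{Ker}(\lambda-L)$, the boundary control system $\dot z=Lz$, $G_1z=u$, $G_2z=v$ has control operator $B=(B_1,B_2)$ (the two listed systems with input $u$ have the same control operator $B_1$, and the third supplies $B_2$). The first listed system gives that $(A,B_1,K)$ generates a regular linear system, the second that $(A,B_1,W)$ does, and the third that $(A,B_2,K)$ and $(A,B_2,W)$ do; consequently the abstract system with state operator $A$, control operator $(B_1,B_2)$ and observation operator $\binom{K}{W}$ restricted to $D(A)$ generates a regular linear system, whose feedthrough is the $2\times2$ block operator $\tilde D=\left(\begin{smallmatrix}\overline{K}_{A,B_1}&\overline{K}_{A,B_2}\\ \overline{W}_{A,B_1}&\overline{W}_{A,B_2}\end{smallmatrix}\right)$.

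Next I would introduce the block feedback operator $\Gamma\binom{\eta}{\zeta}=\binom{\eta}{0}$ from $U\times Y$ to $U\times V$, which encodes the substitution $G_1z=Kz$. Since the input-output map $F(t)$ of the open-loop system is the $2\times2$ block matrix of $F_{A,B_1,K}(t)$, $F_{A,B_2,K}(t)$, $F_{A,B_1,W}(t)$, $F_{A,B_2,W}(t)$, the operator $I-F(t)\Gamma$ is block lower-triangular with diagonal blocks $I-F_{A,B_1,K}(t)$ and $I$, hence invertible exactly because $I$ is an admissible feedback operator for $(A,B_1,K,\overline{K}_{A,B_1})$; the same triangular structure gives that $I-\tilde D\Gamma$ is invertible. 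Theorem~\ref{relation}, together with the fact that feedback preserves regularity (\cite{Weiss1994b}), then shows that the closed loop is a regular linear system, and multiplying out the block matrices in its generator formulas gives $A^\Gamma=(A_{-1}+B_1(I-\overline{K}_{A,B_1})^{-1}K_\Lambda^A)|_X=:A^I$, the control operator restricted to the $v$-channel equal to $J^{A,A^I}B_1(I-\overline{K}_{A,B_1})^{-1}\overline{K}_{A,B_2}+J^{A,A^I}B_2$, and the feedthrough from the $v$-input to the $W$-output equal to $\overline{W}_{A,B_1}(I-\overline{K}_{A,B_1})^{-1}\overline{K}_{A,B_2}+\overline{W}_{A,B_2}$. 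Finally, on $D(A^I)\subset D(L)$ one has $G_1z=Kz$ and $G_2z=0$, so applying Lemma~\ref{1output} twice (first with $Q=K$, which gives $Kz=(I-\overline{K}_{A,B_1})^{-1}K_\Lambda^Az$, then with $Q=W$) identifies $Wz$ on $D(A^I)$ with the $W$-component of the closed-loop observation operator; this lets me rewrite the generator of (\ref{mix}) in exactly the boundary form stated in the theorem.

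The conceptual steps are short; the step I expect to need the most care is the bookkeeping in the last paragraph --- keeping track of which of the two input and two output channels survive the feedback, and checking that the operator appearing in the generator is the boundary observation operator $W$ itself and not its $\Lambda$-extension plus a boundary correction (this is precisely what the two uses of Lemma~\ref{1output} settle). An alternative that avoids re-deriving the closed-loop formulas would be to use Theorem~\ref{fedin} for the control part (it already produces $A^I$ and the stated control operator), Theorem~\ref{cross} (with $B=B_1$, $C=K$, $\Delta C=W$) for the output part, and then to splice the two together using the $v\to W$ feedthrough term supplied by regularity of $(A,B_2,W)$, in the manner of the proof of Theorem~\ref{bcross}.
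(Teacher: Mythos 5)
Your argument is correct, but it follows a genuinely different route from the paper's. The paper proves Theorem \ref{fedin2} by splicing together its earlier results: Theorem \ref{fedin} supplies the control part $(A^I,\ J^{A,A^I}B_1(I-\overline{K}_{A,B_1})^{-1}\overline{K}_{A,B_2}+J^{A,A^I}B_2)$, Theorem \ref{fedin1} supplies the observation part $(A^I,W)$ together with the identity $W=W_\Lambda^A+\overline{W}_{A,B_1}(I-\overline{K}_{A,B_1})^{-1}K_\Lambda^A$ on $D(A^I)$, Theorems \ref{bcross} and \ref{across} are then invoked to show that the pairing of this control operator with $W_\Lambda^A$ and with $\overline{W}_{A,B_1}(I-\overline{K}_{A,B_1})^{-1}K_\Lambda^A$ each generates a regular system, and finally the feedthrough operator is computed separately by writing the transfer function of (\ref{mix}) as $W(\lambda-A_{-1})^{-1}B_1(I-G_{A,B_1,K,\overline{K}_{A,B_1}}(\lambda))^{-1}G_{A,B_2,K,\overline{K}_{A,B_2}}(\lambda)+W(\lambda-A_{-1})^{-1}B_2$ and letting $\lambda\to+\infty$. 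You instead perform a single global feedback argument: assemble the $2\times 2$ block open-loop system with control operator $(B_1,B_2)$, observation $\bigl(\begin{smallmatrix}K\\ W\end{smallmatrix}\bigr)$ and feedthrough $\bigl(\begin{smallmatrix}\overline{K}_{A,B_1}&\overline{K}_{A,B_2}\\ \overline{W}_{A,B_1}&\overline{W}_{A,B_2}\end{smallmatrix}\bigr)$, close the loop with the block operator $\Gamma\bigl(\begin{smallmatrix}\eta\\ \zeta\end{smallmatrix}\bigr)=\bigl(\begin{smallmatrix}\eta\\ 0\end{smallmatrix}\bigr)$, and read off every component of the generator (including the feedthrough, as the $(W,v)$-entry of $\tilde D(I-\Gamma\tilde D)^{-1}$) from one application of Theorem \ref{relation} and the regularity-preservation result of Weiss. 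Your triangularity observation correctly reduces invertibility of $I-F(t)\Gamma$ and of $I-\tilde D\Gamma$ to the hypotheses on $(A,B_1,K,\overline{K}_{A,B_1})$, and your final use of Lemma \ref{1output} to identify the closed-loop observation operator with $W$ on $D(A^I)$ is exactly the identification the paper obtains via Theorem \ref{fedin1}. What your approach buys is uniformity and a feedthrough formula obtained algebraically rather than by a limit computation; what the paper's approach buys is modular reuse of its own intermediate theorems (indeed, the ``alternative'' you sketch in your closing sentence is essentially the paper's actual proof). The one point to make explicit in a final write-up is that the assembled $2\times 2$ open-loop system is itself well-posed and regular (this follows blockwise from the three hypothesized systems, in the same way the paper assembles $\Sigma_{A,\tilde B,\tilde C}$ in the proofs of Theorems \ref{across} and \ref{cross}).
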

\begin{proof}\ \
By Theorem \ref{fedin}, it follows that $  \left\{
   \begin{array}{ll}
   \dot{z}(t)&=Lz(t)\\
     G_1z(t)&=Kz(t)\\
     G_2z(t)&=v(t)
   \end{array}
 \right.
$ is an abstract linear control system with generator
$(A^I, J^{A,A^I}B_1(I-\overline{K}_{A,B_1})^{-1}\overline{K}_{A,B_2}+J^{A,A^I}B_2)$. It follows from Theorem \ref{fedin1} that
$  \left\{
   \begin{array}{ll}
   \dot{z}(t)&=Lz(t)\\
     G_1z(t)&=Kz(t)\\
     G_2z(t)&=0\\
     y(t)&=Wz(t)
   \end{array}
 \right.$ is an abstract linear observation system with generator
$(A^I, W)$
and the restriction of $W$ to
$D(A^I)$ is equal to $W_\Lambda^A
+\overline{W}_{A,B_1}(I-\overline{K}_{A,B_1})^{-1}K_\Lambda^A$.
By Theorem \ref{bcross}, our assumptions imply that
$(A^I,J^{A,A^I}B_2,W_\Lambda^A)$ generates a regular linear system. Combining this with the boundedness of operator $J^{A,A^I}B_1(I-\overline{K}_{A,B_1})^{-1}\overline{K}_{A,B_2}$ implies that $(A^I,J^{A,A^I}B_1(I-\overline{K}_{A,B_1})^{-1}\overline{K}_{A,B_2}+J^{A,A^I}B_2,
W_\Lambda^A)$ generates a regular linear system.
By Theorem \ref{across}, we obtain that $(A^I,J^{A,A^I}B_1(I-\overline{K}_{A,B_1})^{-1}\overline{K}_{A,B_2}+J^{A,A^I}B_2,
(I-\overline{K}_{A,B_1})^{-1}K_\Lambda^A)$ generates a regular linear system. Since $\overline{W}_{A,B_1}$ is bounded,
$(A^I,J^{A,A^I}B_1(I-\overline{K}_{A,B_1})^{-1}\overline{K}_{A,B_2}+J^{A,A^I}B_2,
\overline{W}_{A,B_1}(I-\overline{K}_{A,B_1})^{-1}K_\Lambda^A)$ generates a regular linear system.
Therefore, $(A^I,J^{A,A^I}B_1(I-\overline{K}_{A,B_1})^{-1}\overline{K}_{A,B_2}+J^{A,A^I}B_2,
W)$ is a regular linear system. Hence the regularity of system (\ref{mix}) is obtained by Lemma \ref{1output}.

Next, we shall compute the feedthrough operator.
By Theorem \ref{across}, for any enough big $ Re(\lambda)$, we have
\begin{align*}
   &(\lambda-(A^I)_{-1})^{-1}\bigg(J^{A,A^I}B_1(I-\overline{K}_{A,B_1})^{-1}\overline{K}_{A,B_2}
+J^{A,A^I}B_2\bigg)\\
=& (\lambda-A_{-1})^{-1}B_1(I-G_{A,B_1,K,\overline{K}_{A,B_1}}(\lambda))^{-1}
G_{A,B_2,K,\overline{K}_{A,B_2}}(\lambda)+(\lambda-A_{-1})^{-1}B_2,
\end{align*}
and the transform function of (\ref{mix}) is given by
\begin{align*}
   W(\lambda-A_{-1})^{-1}B_1(I-G_{A,B_1,K,\overline{K}_{A,B_1}}(\lambda))^{-1}
G_{A,B_2,K,\overline{K}_{A,B_2}}(\lambda)+W(\lambda-A_{-1})^{-1}B_2.
\end{align*}
Observe that the assumption implies that the strong limit
\begin{align*}
   &\lim_{\lambda\rightarrow +\infty}\bigg(W(\lambda-A_{-1})^{-1}B_1(I-G_{A,B_1,K,\overline{K}_{A,B_1}}(\lambda))^{-1}
G_{A,B_2,K,\overline{K}_{A,B_2}}(\lambda)+W(\lambda-A_{-1})^{-1}B_2\bigg)\\
=&\lim_{\lambda\rightarrow +\infty}W(\lambda-A_{-1})^{-1}B_1(I-G_{A,B_1,K,\overline{K}_{A,B_1}}(\lambda))^{-1}
G_{A,B_2,K,\overline{K}_{A,B_2}}(\lambda)\\
&+\lim_{\lambda\rightarrow +\infty}W(\lambda-A_{-1})^{-1}B_2\\
=&\bigg(\overline{W}_{A,B_1}(I-\overline{K}_{A,B_1})^{-1}\overline{K}_{A,B_2}
+\overline{W}_{A,B_2}\bigg)
\end{align*}
hold.
Therefore the feedthrough operator of (\ref{mix}) is $\overline{W}_{A,B_1}(I-\overline{K}_{A,B_1})^{-1}\overline{K}_{A,B_2}
+\overline{W}_{A,B_2}$.
The proof is therefore completed.
\end{proof}

\begin{example}
Consider the following boundary system governed by wave equations
\begin{equation} \label{1.1}
\left\{\begin{array}{l}
w_{tt}(x,t)=\Delta w(x,t),\;\; x\in \Omega, t>0, \\
w(x,t)=-\dfrac{\partial(\mathcal{A}^{-1}w)}{\partial \nu}, \; \; x\in \Gamma_1,  t\ge 0,\\
w(x,t)=u(x,t), \;\; x\in \Gamma_0,  t\ge 0, \\
 y(x,t)=-\dfrac{\partial(\mathcal{A}^{-1}w)}{\partial \nu},\;\; x\in \Gamma_0, t\ge 0
\end{array}\right.
\end{equation}
where $\Omega\subset R^n, n\ge 2$ is an open bounded region with
smooth $C^3$-boundary $\partial
\Omega=\overline{\Gamma_{0}}\cup\overline{\Gamma_{1}}$.
$\Gamma_{0},\Gamma_{1}$ are disjoint parts of the boundary
relatively open in $\partial \Omega$, ${\rm
int}(\Gamma_{1})\neq\emptyset$ and ${\rm
int}(\Gamma_{0})\neq\emptyset$, $\nu$ is the unit normal vector of $\Gamma_0$ pointing towards the exterior of $\Omega$, $u$ is the input function (or control) and $y$ is the output function (or output).

Let $H=L^2(\Omega)\times H^{-1}(\Omega)$ be the state space and $U=L^2(\partial \Gamma_0), \ V=L^2(\partial \Gamma_1)$ be the control (input) or  observation (output) space.
Guo and Zhang \cite{Guo2005Z} proved that system
\begin{equation*}
\left\{\begin{array}{l}
w_{tt}(x,t)=\Delta w(x,t),\;\; x\in \Omega, t>0, \\
w(x,t)=v(x,t), \; \; x\in \Gamma_1,  t\ge 0,\\
w(x,t)=0, \;\; x\in \Gamma_0,  t\ge 0, \\
 y(x,t)=-\dfrac{\partial(\mathcal{A}^{-1}w)}{\partial \nu},\;\; x\in \Gamma_1, t\ge 0
\end{array}\right.
\end{equation*}
is a regular linear system with feedthrough operator $I$ and with admissible feedback operator $I$.
Moreover,
\begin{equation*}
\left\{\begin{array}{l}
w_{tt}(x,t)=\Delta w(x,t),\;\; x\in \Omega, t>0, \\
w(x,t)=0, \; \; x\in \Gamma_1,  t\ge 0,\\
w(x,t)=u(x,t), \;\; x\in \Gamma_0,  t\ge 0, \\
 y(x,t)=-\dfrac{\partial(\mathcal{A}^{-1}w)}{\partial \nu},\;\; x\in \Gamma_1, t\ge 0
\end{array}\right.
\end{equation*}
is a regular linear system.
By the same procedure, one can verify that
\begin{equation*}
\left\{\begin{array}{l}
w_{tt}(x,t)=\Delta w(x,t),\;\; x\in \Omega, t>0, \\
w(x,t)=v(x,t), \; \; x\in \Gamma_1,  t\ge 0,\\
w(x,t)=0, \;\; x\in \Gamma_0,  t\ge 0, \\
 y(x,t)=-\dfrac{\partial(\mathcal{A}^{-1}w)}{\partial \nu},\;\; x\in \Gamma_0, t\ge 0
\end{array}\right.
\end{equation*}
and
\begin{equation*}
\left\{\begin{array}{l}
w_{tt}(x,t)=\Delta w(x,t),\;\; x\in \Omega, t>0, \\
w(x,t)=0, \; \; x\in \Gamma_1,  t\ge 0,\\
w(x,t)=u(x,t), \;\; x\in \Gamma_0,  t\ge 0, \\
 y(x,t)=-\dfrac{\partial(\mathcal{A}^{-1}w)}{\partial \nu},\;\; x\in \Gamma_1, t\ge 0
\end{array}\right.
\end{equation*}
are regular linear systems. Then we claim by Theorem \ref{fedin2} that system (\ref{1.1}) is regular with feedthrough operator $I$.
\end{example}


\end{document}